\title{Simple Models of  Randomization and Preservation Theorems}
\date{\today}
\author{Karim Khanaki\thanks{Partially supported by IPM grant  1404030118}\\Arak University of Technology, and 
\\Institute for Research in Fundamental Sciences  (IPM)
 \and   Massoud Pourmahdian
\\Institute for Research in Fundamental Sciences  (IPM), and\\Amirkabir University of Technology}
\newtheorem{Theorem}{Theorem}[section]
\newtheorem{Proposition}[Theorem]{Proposition}
\newtheorem{Definition}[Theorem]{Definition}
\newtheorem{Remark}[Theorem]{Remark}
\newtheorem{Lemma}[Theorem]{Lemma}
\newtheorem{Corollary}[Theorem]{Corollary}
\newtheorem{Fact}[Theorem]{Fact}
\newcommand{\R}{\mathbb R}
\begin{document}
\maketitle

\begin{abstract} {\normalsize The main purpose of this paper is to present a  new and more uniform model-theoretic/combinatorial proof of the theorem (\cite{Ben VC}): The randomization $T^{R}$ of a complete first-order  theory $T$ with  $NIP$ is a (complete) first-order continuous theory with $NIP$.
The proof method is based on the significant use of a particular type of models of $T^{R}$, namely simple models,  certain indiscernible   arrays, and Rademacher mean width. Using simple models of $T^R$ gives the advantage of re-proving this theorem in a simpler and quantitative manner.
  We finally turn our attention to $NSOP$ in randomization. We show that based on the definition of $NSOP$ given  \cite{K-sop}, $T^R$ is stable if and only if it is $NIP$ and $NSOP$.}

\end{abstract}

\section{Introduction}
The main concern of the current paper relies on a general theme as to whether certain model-theoretic properties are preserved under the randomization construction.  Following the seminal paper of Keisler, \cite{Kei}, in \cite{BK},  the randomization of a first-order theory $T$, denoted by $T^R$, is introduced as a complete continuous theory. Furthermore, in tiles of paper, e.g.  \cite{BK}, \cite{Ben VC}, \cite{AK}, \cite{AGK}, \cite{I}, \cite{CT} several model-theoretic properties such as  $\omega$-categoricity, $\omega$-stability, stability, and $NIP$ are verified that are transferred from $T$ to $T^R$.

Among these cases, the $NIP$ holds a special situation, since its proof in Ben Yaacov's paper,\cite{Ben VC}, is quite technical and requires heavy tools from combinatorics and analysis.
 To this end, he utilized powerful analytic tools to maintain a continuous version of $VC$-theory.  A question that is raised is whether it is possible to provide a more direct proof of $NIP$ preservation that does not require these powerful tools and, in addition,  it uses more familiar techniques from model theory. One of our goals in this article is to present a positive answer to this question.
 
 

 On the other hand, the proof of preservation of stability in \cite{BK} is based on the use of an adaptation of Shelah's ranks and definability of types, and therefore, the idea of its proof is significantly different from the $NIP$ case in \cite{Ben VC}.
Essentially, due to the similar nature of stability and $NIP$, another question that naturally arises is whether it is possible to provide a unified approach for both of these properties.  
We do not provide a proof here; however, we present the key ideas, which parallel those used in the NIP setting. The detailed proof is left for a future paper.


 
 These results particularly highlight the importance of simple models and shed new light on potential applications of randomization in other fields such as combinatorics.

We further, raise some discussions about $NSOP$ property.  Without a universally accepted concept of $NSOP$ for continuous logic, we show that, relativized to randomization context,  the definition of $NSOP$ given   \cite{K-sop}, is the weakest condition amongst existing concepts which satisfies Shelah's theorem. That is, $T^R$ is stable if and only if it is $NIP$ and $NSOP$.

This paper is organized as follows: In Section 2 we give notations and preliminaries about indiscernible sequences/arrays, randomization, and $VC$-theorem for $NIP$ theories. In Section 3 we present our main results about studying $NIP$, stability, 
and $NSOP$ in $T^R$.
In section $4$, we finally turn our attention to the randomization of continuous theories and present a proof sketch as to how one could extend the ideas from section $3$ showing that the $NIP$  property can be transferred to the randomization $T^R$ of a continuous theory $T$.

\section{Preliminaries}
In this section we review the basic materials which are essentials  for proving the main theorems.
Until Section $4$, we let $T$ be a classical first-order theory in a language $L$ and $M$ a model of $T$.
 To simplify of the notations we assume that all partitioned formulas are of the form $\phi(x;y)$ where  $|x|=1$ and $y$ is a finite tuple of variables. 
As we also use theories within continuous setting, it would be useful to make this convention to consider the interpretation of $L$-formulas as a $\{0,1\}$-valued functions, that is, for $a,b\in M$, $\phi(x;y)=1$ if $\models\phi(a;b)$ and $\phi(a;b)=0$ otherwise.

\subsection{Indiscernible  Arrays}
The following notions of array indiscernibility play an important role in proving our main results.
\begin{Definition}\label{def type}
\em{Let $\phi(x;y)$ be an $L$-formula and $n\in\Bbb N$.
A $\phi$-$n$-formula $\Phi(x_1,\ldots,x_n)$ is an $L$-formula of the form  $\exists y\big(\bigwedge_{i\in E}\phi(x_i;y))\wedge\bigwedge_{i\in F} \neg\phi(x_i;y))\big) $ or $\forall y\big(\bigvee_{i\in E} \phi(x_i;y)) \vee\bigvee_{i\in F} \neg\phi(x_i;y))\big)$, where $E,F$ are (arbitrary) disjoint subsets of $\{1,\ldots,n\}$.  Let $\Delta_{\phi-n}$ be the set of $\phi$-$n$-formulas.  For $\boldsymbol{b} =(a_1,\ldots,a_n)\in M^n$, the $\Delta_{\phi-n}$-type of $\boldsymbol{b}$, denoted by $tp_{\phi,n}(\boldsymbol{b})$, is the set of all $\phi$-$n$-formulas $\Phi(x_1,\ldots,x_n)$ such that $\models\Phi(\boldsymbol{b})$.}
\end{Definition}

Notice that, up to logical equivalence, the set $\Delta_{\phi-n}$ is finite and negation-closed.

\begin{Definition}\label{indiscernible}
\em{	Fix $n,k,m\in \Bbb N$ with $n\leq m$. Let $(a_{ij})_{i\leq m, j\leq k}$ be an $(m,k)$-array  of elements of $M$.
	We say that the array $(a_{ij})_{i\leq m, j\leq k}$ is $\phi$-$k$-$n$-indiscernible (over the empty set) if  for each $i_1<\cdots<i_n\leq m$ and $j_1<\cdots<j_n\leq m$, and for any $l\leq k$,
	$${\rm tp}_{\phi,n}(a_{i_1l},\ldots,  a_{i_nl})={\rm tp}_{\phi,n}(a_{j_1l},\ldots,a_{j_nl}).$$}
\end{Definition}

\begin{Definition}\label{strong indiscernible}
\em{	For $n,k,m\in \Bbb N$ with $n\leq m$, let $I=(a_{ij})_{i\leq m,j\leq k}$ be a $(m,k)$-array of elements of $M$.
	We say that $I$  is strongly $\phi$-$k$-$n$-indiscernible array (over the empty set) if  for each $i_1<\cdots<i_n\leq m$ and $j_1<\cdots<j_n\leq m$, and for every map $f:\{1,\ldots,n\}\to\{1,\ldots,k\}$,
	$${\rm tp}_{\phi,n}(a_{i_11},\ldots,  a_{i_n1})={\rm tp}_{\phi,n}(a_{j_1,f(1)},\ldots,a_{j_n,f(n)}).$$}
	
\end{Definition}

For a given array $I=(a_{ij})_{i\leq N,j\leq k}$, we denote it as $I=(\bar{a}_{1},\dots,\bar{a}_{N})$ if $\bar{a}_i$, is the $i$-th row  of $I$, for each $i\leq N$.
For given $m\leq N$, a $(m,k)$-sub-array of $I$ is a subsequence $(\bar a_{i_1},\ldots,\bar a_{i_m})$ of $I$ of length $m$.

Since $\Delta_{\phi-n}$ is finite, the following fact easily follows from the finite Ramsey's theorem.
\begin{Fact}\label{Ramsey}
For each $n,k,m\in\Bbb N$ with $n\leq m$, there is a large number $N\in\Bbb N$
such that for every $(N,k)$-array $(\bar a_1,\ldots,\bar a_N)$  of elements of $M$ there exists a $\phi$-$k$-$n$-indiscernible (respectively  strongly $\phi$-$k$-$n$-indiscernible) $(m,k)$-sub-array  $(\bar a_{i_1},\ldots,\bar a_{i_m})$ of $I$.
\end{Fact}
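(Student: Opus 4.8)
The plan is to deduce both halves of the statement from the finite Ramsey theorem for $n$-uniform hypergraphs, exploiting only the remark just above: $\Delta_{\phi-n}$ is finite up to logical equivalence, hence there are only finitely many complete $\Delta_{\phi-n}$-types of $n$-tuples from $M$; write $r$ for an upper bound on their number (e.g. $r=2^{d}$, where $d$ is the number of $\phi$-$n$-formulas modulo logical equivalence). Consequently, any assignment of a $\Delta_{\phi-n}$-type to each $n$-element subset of $\{1,\ldots,N\}$ is a coloring by finitely many colors, and a monochromatic subset is precisely a set of row-indices on which the relevant types are constant. Throughout, for a number $c$ of colors let $R^{(n)}_{c}(m)$ denote the Ramsey number, i.e. the least $N$ such that every $c$-coloring of the $n$-element subsets of $\{1,\ldots,N\}$ admits a monochromatic subset of size $m$; this is finite by finite Ramsey.

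For the plain $\phi$-$k$-$n$-indiscernible case, fix $n\le m$ and $k$ and let $I=(\bar a_1,\ldots,\bar a_N)$ be an $(N,k)$-array. Color each $n$-set $\{i_1<\cdots<i_n\}\subseteq\{1,\ldots,N\}$ by the tuple of its column-wise $\Delta_{\phi-n}$-types,
$$c(\{i_1<\cdots<i_n\})=\big(\,{\rm tp}_{\phi,n}(a_{i_11},\ldots,a_{i_n1}),\ \ldots,\ {\rm tp}_{\phi,n}(a_{i_1k},\ldots,a_{i_nk})\,\big);$$
this uses at most $r^{k}$ colors. Taking $N\ge R^{(n)}_{r^{k}}(m)$, the finite Ramsey theorem produces indices $i_1<\cdots<i_m$ on which $c$ is constant. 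Unwinding the definition of $c$, this says exactly that for every column $l\le k$ the value ${\rm tp}_{\phi,n}(a_{j_1l},\ldots,a_{j_nl})$ is the same for all $j_1<\cdots<j_n$ drawn from $\{i_1,\ldots,i_m\}$; that is, the sub-array $(\bar a_{i_1},\ldots,\bar a_{i_m})$ is $\phi$-$k$-$n$-indiscernible in the sense of Definition~\ref{indiscernible}.

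For the strong version one simply refines the coloring: color $\{i_1<\cdots<i_n\}$ by the whole function
$$f\ \longmapsto\ {\rm tp}_{\phi,n}\big(a_{i_1,f(1)},\ldots,a_{i_n,f(n)}\big),\qquad f\colon\{1,\ldots,n\}\to\{1,\ldots,k\},$$
which now takes at most $r^{k^{n}}$ values. Choosing $N\ge R^{(n)}_{r^{k^{n}}}(m)$ and extracting a monochromatic $m$-set $\{i_1,\ldots,i_m\}$, we get that ${\rm tp}_{\phi,n}(a_{j_1,f(1)},\ldots,a_{j_n,f(n)})$ depends neither on the increasing tuple $j_1<\cdots<j_n$ from $\{i_1,\ldots,i_m\}$ nor on $f$; specializing $f$ to the constant map $1$ identifies this common type with ${\rm tp}_{\phi,n}(a_{j_11},\ldots,a_{j_n1})$, which is precisely the condition in Definition~\ref{strong indiscernible}.

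I do not expect a real obstacle here, the argument being purely combinatorial; the only points needing (minor) care are verifying that the color sets are finite --- which is the content of the remark on $\Delta_{\phi-n}$ --- and, in the strong case, observing that constancy of the function-valued color yields the one-sided formulation of Definition~\ref{strong indiscernible} by instantiating the constant function. It is also worth noting that the argument is effective, giving $N$ explicitly in terms of the classical Ramsey numbers $R^{(n)}_{r^{k}}(m)$ and $R^{(n)}_{r^{k^{n}}}(m)$, in keeping with the quantitative emphasis of the paper.
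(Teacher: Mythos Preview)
Your argument for the $\phi$-$k$-$n$-indiscernible case is correct and is exactly the one-line finite-Ramsey deduction the paper intends.

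For the strong case there is a genuine gap. Monochromaticity of your function-valued coloring gives only that, for each \emph{fixed} $f\colon\{1,\dots,n\}\to\{1,\dots,k\}$, the type ${\rm tp}_{\phi,n}(a_{j_1,f(1)},\ldots,a_{j_n,f(n)})$ is independent of the choice of rows $j_1<\cdots<j_n$; it does not yield independence from $f$, which is what you assert. A concrete obstruction: take $n=1$, $k=2$, and an $(N,2)$-array in which every $a_{i,1}$ satisfies $\exists y\,\phi(x;y)$ while no $a_{i,2}$ does. Then for every sub-array and every row $j$ one has ${\rm tp}_{\phi,1}(a_{j,1})\neq{\rm tp}_{\phi,1}(a_{j,2})$, so Definition~\ref{strong indiscernible} fails with $f\equiv 2$. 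Since passing to a sub-array selects rows but keeps all $k$ columns, no Ramsey extraction on rows alone can force different columns to realize the same $\Delta_{\phi-n}$-types. Thus the step ``nor on $f$'' is unjustified, and indeed the strong clause as literally written cannot hold for arbitrary arrays; either Definition~\ref{strong indiscernible} is meant to require only constancy over row-choices for each fixed $f$ (in which case your coloring already suffices and the final sentence about specializing $f$ should be dropped), or the Fact carries an unstated hypothesis on the array.
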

Besides the following lemma connects the above notion of indiscerniblity with the shattering of a sequence inside $M$, it is one of the main ingredient of proving Corollary~\ref{Corollary}.

\begin{Lemma}\label{lemma}
	Let $n,m\in\Bbb N$ with $2n\leq m$ and $(a_i:i\leq m)$ a sequence of elements in $M$.  Assume that $(a_i:i\leq m)$ is $\phi$-$1$-$n$-indiscernible and there is an element $b\in M$ such that $\sum_{i=1}^{m-1}|\phi(a_{i+1};b)-\phi(a_{i};b)|\geq 2n-1$.  Then the sequence $(a_i:i\leq n)$ is shattered by $\phi(x,y)$ in $M$. That is, for any $I\subseteq\{1,\ldots,n\}$ there exists $b_I\in M$ such that $\models\phi(a_i;b_I)$ if  and only if $i\in I$.
\end{Lemma}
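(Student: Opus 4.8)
\emph{Proof proposal.} The starting point is the observation that $\sum_{i=1}^{m-1}|\phi(a_{i+1};b)-\phi(a_i;b)|$ is exactly the number of indices $i<m$ at which the $\{0,1\}$-valued sequence $\phi(a_1;b),\dots,\phi(a_m;b)$ changes value. So the hypothesis says this sequence has at least $2n-1$ sign changes, and therefore decomposes into at least $2n$ maximal constant blocks, consecutive blocks carrying opposite values. Choosing one representative index from each of the first $2n$ blocks, I obtain indices $c'_1<c'_2<\cdots<c'_{2n}\le m$ along which $\phi(a_{c'_t};b)$ strictly alternates between $0$ and $1$. The point that needs a little care here is to count \emph{blocks}, not switches: $2n-1$ switches produce $2n$ blocks, which is exactly what is needed.

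Next I would fix an arbitrary $I\subseteq\{1,\dots,n\}$ and use this alternating subsequence to manufacture an increasing $n$-tuple of indices realizing the ``$I$-pattern'' of $\phi(\cdot;b)$. Group the $c'_t$ into $n$ consecutive pairs $(c'_1,c'_2),(c'_3,c'_4),\dots,(c'_{2n-1},c'_{2n})$; in the $j$-th pair exactly one index $c$ has $\phi(a_c;b)=1$ and the other has $\phi(a_c;b)=0$, so I let $c_j$ be the member with $\phi(a_{c_j};b)=1$ when $j\in I$ and the member with $\phi(a_{c_j};b)=0$ when $j\notin I$. Then $c_1<c_2<\cdots<c_n$, and $b$ itself witnesses that the $\phi$-$n$-formula
\[
\Phi_I(x_1,\dots,x_n)\;:=\;\exists y\Big(\bigwedge_{j\in I}\phi(x_j;y)\wedge\bigwedge_{j\notin I}\neg\phi(x_j;y)\Big)
\]
holds of $(a_{c_1},\dots,a_{c_n})$; note $\Phi_I\in\Delta_{\phi-n}$ by Definition~\ref{def type} (with $E=I$, $F=\{1,\dots,n\}\setminus I$).

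Finally I would invoke $\phi$-$1$-$n$-indiscernibility (Definition~\ref{indiscernible} with $k=1$): since $(c_1,\dots,c_n)$ and $(1,\dots,n)$ are both strictly increasing $n$-tuples of indices $\le m$, they have the same $\Delta_{\phi-n}$-type, so $\models\Phi_I(a_1,\dots,a_n)$. Unwinding the existential quantifier gives $b_I\in M$ with $\models\phi(a_j;b_I)$ iff $j\in I$ for $j=1,\dots,n$. As $I\subseteq\{1,\dots,n\}$ was arbitrary, $(a_i:i\le n)$ is shattered by $\phi(x,y)$ in $M$.

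The whole argument is combinatorial bookkeeping, so I do not expect a genuine obstacle. The only delicate points are the block count in the first step and checking that in each pair the required choice is always available, both of which follow from the fact that consecutive blocks carry opposite $\phi$-values; everything else is a direct appeal to the definitions of $\phi$-$n$-formula and $\phi$-$1$-$n$-indiscernibility.
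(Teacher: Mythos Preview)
Your argument is correct and is precisely the standard alternation-to-shattering argument: extract an alternating subsequence of length $2n$ from the $2n-1$ sign changes, pick one index from each of the $n$ consecutive pairs according to $I$, and transfer the resulting $\phi$-$n$-formula to $(a_1,\dots,a_n)$ via $\phi$-$1$-$n$-indiscernibility. The paper does not give its own proof but simply cites the (i)$\Rightarrow$(ii) direction of Lemma~2.8 in \cite{K-Baire}, which is exactly this argument; so your proposal supplies in full the details the paper defers to.
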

\begin{proof}
	The proof is essentially the same as the proof of  the direction (i)~$\Longrightarrow$~(ii) of Lemma~2.8 of \cite{K-Baire}. Notice that, the indiscernibility of $(a_i:i\leq m)$ and  the existence of $b$  guarantee that the set $\{a_1,\ldots,a_n\}$ is shattered by $\phi(x,y)$ in $M$.
\end{proof}

The following definition as well as Definition~\ref{strong indiscernible} are required for proving the Theorem~\ref{main-2-2}.

\begin{Definition}

\em{For $k\geq 1$, $a\in M$ and $b_1,\dots,b_k\in M^{|y|}$, the average measure of $\phi(x,y)$ with respect to $a$ and $\bar{b}=(b_1,\dots,b_k)$, denoted by  ${\rm Ave}(b_1,\dots,b_k)(\phi(a,y))$,  stands for $\frac{1}{k}|\{j\leq k:~\models\phi(a;b_j)\}|$.  }
\end{Definition}

\begin{Remark}\label{avdef}
\em{The average measure  gives rise to a particular Keisler measure on the variable $y$, namely generically stable measures. (Cf. Subsection~2.3 below.)  Furthermore, for each rational numbers $r<s\in[0,1]$, there exist $L$-formulas $L_{\phi,k,r}(x,\bar{y})$ and $G_{\phi,k,s}(x,\bar{y})$ such that for each  $a\in M$ and $b_1,\dots,b_k\in M^{|y|}$ we have that
        $$\models L_{\phi,k,r}(a,\bar{b})\;\;\;\ \mbox{if and only if}\;\;\;\ {\rm Ave}(b_1,\dots,b_k)(\phi(a,y))\leq r,$$

         $$\models G_{\phi,k,s}(a,\bar{b})\;\;\;\ \mbox{if and only if}\;\;\;\ {\rm Ave}(b_1,\dots,b_k)(\phi(a,y))\geq s.
$$
}
\end{Remark}

Now on the basis of the above remark we have the following definition.

\begin{Definition}
\em{	Let $k,n\in\mathbb{N}$, rational numbers $r<s\in [0,1]$ and $\phi(x;y)$ be an $L$-formula.  An average \noindent $\phi$-$k$-$n$-condition, or simply $A-\phi$-$k$-$n$-condition, is an $L$-formula  $\Phi(x_1,\ldots,x_n)$ in  the form
	
	$\exists \bar y\big(\bigwedge_{i\in I} {\rm Ave}(\bar y)(\phi(x_i;y))\leq r\wedge\bigwedge_{i\notin I} {\rm Ave}(\bar y)(\phi(x_i;y))\geq s\big) $ or
	
	$\forall \bar y\big(\bigvee_{i\in I} {\rm Ave}(\bar y)(\phi(x_i;y))\leq r\vee\bigvee_{i\notin I} {\rm Ave}(\bar y)(\phi(x_i;y))\geq s\big)$,
	
	\noindent
	where  $I\subseteq \{1,\ldots,n\}$.
	
	\noindent

	\noindent
For $\boldsymbol{b} =(\bar a_1,\ldots,\bar a_n)$ of $k$-tuples of elements in $M$,  the $A-\phi$-$k$-$n$-type of $\boldsymbol{b}$, denoted by $tp_{A-\phi,k,n}(\boldsymbol{b})$, is the set of all $A-\phi$-$k$-$n$-conditions which are satisfied by  $\boldsymbol{b}$.}
\end{Definition}

\begin{Definition}\label{strongest indiscernible}
\em{	Let  $n,k,m\in \Bbb N$ with $n\leq m$, $r<s\in[0,1]$ rational numbers and $\phi(x,y)$ be an $L$-formula. We say that the $(m,k)$-array $(a_{i,j})_{i\leq m,j\leq k}$ of  elements of $M$  is strongly $A-\phi$-$k$-$n$-indiscernible sequence (over the empty set) if  for each $i_1<\cdots<i_n\leq m$ and $j_1<\cdots<j_n\leq m$, and for every map $f:\{1,\ldots,n\}\to\{1,\ldots,k\}$,
	$${\rm tp}_{A-\phi,k,n}(a_{i_11},\ldots,  a_{i_n1})={\rm tp}_{A-\phi,k,n}(a_{j_1f(1)},\ldots,a_{j_nf(n)}).$$}
\end{Definition}
The reader can easily verify that Definition ~\ref{strong indiscernible} is a special case of Definition~\ref{strongest indiscernible} if we take $r=0$ and $s=1$.

\medskip
Again, since, up to logical equivalence, there are finitely  average \noindent $\phi$-$k$-$n$-conditions the finite Ramsey's theorem yields  the following fact.

\begin{Fact}\label{Ramsey strongest}
	Let  $n,k,m\in \Bbb N$ with $n\leq m$, $r<s\in[0,1]$ rational numbers and $\phi(x,y)$ be an $L$-formula. Then there exists  a large number $N\in\Bbb N$
	such that for every $(N,k)$-array $I=(\bar a_1,\ldots,\bar a_N)$  of elements of  $M$ there exists a strongly $A-\phi$-$k$-$n$-indiscernible $(m,k)$-sub-array  $(\bar a_{i_1},\ldots,\bar a_{i_m})$ of $I$.
\end{Fact}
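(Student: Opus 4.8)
The plan is to reproduce, one level up, the Ramsey argument underlying Fact~\ref{Ramsey}: replace the $\phi$-$n$-formulas by the $A$-$\phi$-$k$-$n$-conditions, and inflate the colour palette so that it absorbs the extra quantification over column-selection maps present in Definition~\ref{strongest indiscernible}. First I would isolate the finiteness input. Fix $\phi$, $k$, $n$, $m$ and rationals $r<s\in[0,1]$. By Remark~\ref{avdef}, for this fixed data the relations expressing ${\rm Ave}(\bar y)(\phi(x_i;y))\le r$ and ${\rm Ave}(\bar y)(\phi(x_i;y))\ge s$ are given by the fixed $L$-formulas $L_{\phi,k,r}(x_i,\bar y)$ and $G_{\phi,k,s}(x_i,\bar y)$; hence, up to logical equivalence, an $A$-$\phi$-$k$-$n$-condition is completely determined by a choice of quantifier block ($\exists\bar y$ or $\forall\bar y$) and a subset $I\subseteq\{1,\dots,n\}$. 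Thus there are at most $2^{n+1}$ such conditions up to equivalence, this set is negation-closed, and consequently there are only finitely many $A$-$\phi$-$k$-$n$-types; write $t$ for their number.

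Next I would set up the colouring and invoke the finite Ramsey theorem. Given an $(N,k)$-array $(\bar a_1,\dots,\bar a_N)$, colour each $n$-element subset $\{i_1<\dots<i_n\}$ of $\{1,\dots,N\}$ by
$$\big({\rm tp}_{A-\phi,k,n}(a_{i_1f(1)},\dots,a_{i_nf(n)})\big)_{f\colon\{1,\dots,n\}\to\{1,\dots,k\}},$$
that is, by the function assigning to each of the $k^{n}$ column-selection maps $f$ the $A$-$\phi$-$k$-$n$-type of the $n$-tuple read off rows $i_1,\dots,i_n$ along $f$. By the previous paragraph this uses at most $t^{k^{n}}$ colours, a bound depending only on $\phi,k,n,r,s$. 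Let $N$ be the corresponding Ramsey number for $n$-subsets, $t^{k^{n}}$ colours, and homogeneous sets of size $m$; this single $N$ then works for every $(N,k)$-array at once. For a given array, apply Ramsey to obtain a monochromatic set $\{i_1<\dots<i_m\}$ and pass to the sub-array $(\bar a_{i_1},\dots,\bar a_{i_m})$: any two increasing $n$-subsequences of $(i_1,\dots,i_m)$ determine $n$-subsets lying in the monochromatic block, hence receive the same colour, so they agree in the coordinate indexed by any prescribed $f$ as well as in the coordinate indexed by the constant map with value $1$; reading these equalities off gives precisely the identities required in Definition~\ref{strongest indiscernible}. Specialising $r=0$, $s=1$ recovers the strongly $\phi$-$k$-$n$-indiscernible case of Fact~\ref{Ramsey}.

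The finiteness bookkeeping is routine; the only genuine difference from the proof of Fact~\ref{Ramsey}, and the step to handle with care, is the universal quantification over the column-selection maps $f$ in Definition~\ref{strongest indiscernible}. A single $n$-subset of rows must be controlled along all $k^{n}$ ways of extracting an $n$-tuple from it, so the colour assigned to that subset has to package all of these types simultaneously rather than a single one — this is exactly what inflates the palette from $t$ to $t^{k^{n}}$ and is the point at which the argument departs from the one behind Fact~\ref{Ramsey}. Once the colour is chosen to carry this full packet of data, monochromaticity delivers the strong indiscernibility with no further work.
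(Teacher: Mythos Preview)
Your overall approach---reduce to finitely many $A$-$\phi$-$k$-$n$-conditions and apply finite Ramsey---is exactly the one the paper sketches in its one-line justification, and packaging the $k^{n}$ column-selection types into a single colour is the natural move. The problem is the last deduction. Monochromaticity of your colouring says: for any two increasing $n$-tuples of rows $S,S'$ inside the homogeneous block and for each \emph{fixed} $f$, one has ${\rm tp}_{A-\phi,k,n}(a_{S,f})={\rm tp}_{A-\phi,k,n}(a_{S',f})$. What Definition~\ref{strongest indiscernible} actually demands is the \emph{cross}-coordinate identity ${\rm tp}_{A-\phi,k,n}(a_{S,\mathrm{const}\,1})={\rm tp}_{A-\phi,k,n}(a_{S',f})$, i.e.\ that the coordinate of the common colour at $f=\mathrm{const}\,1$ equals its coordinate at every other $f$. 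Ramsey homogeneity is an \emph{inter}-subset condition and says nothing about relations among different coordinates of the single common colour; your sentence ``they agree in the coordinate indexed by any prescribed $f$ as well as in the coordinate indexed by the constant map with value $1$; reading these equalities off gives precisely the identities required'' conflates these two things.

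This is not a cosmetic oversight in the colouring. Take $k=2$, $n=1$, and an $(N,2)$-array whose first column consists of elements realising some fixed $A$-$\phi$-$k$-$1$-condition while the second column consists of elements realising its negation; then ${\rm tp}(a_{i,1})\neq{\rm tp}(a_{j,2})$ for all $i,j$, so no sub-array of rows is strongly indiscernible in the sense of Definition~\ref{strongest indiscernible}. Ramsey on row-indices alone simply cannot force different columns to share types. The paper's own one-line appeal to Ramsey is subject to the same difficulty. What your colouring \emph{does} deliver is the weaker ``same $f$ on both sides'' indiscernibility; if you want to rescue the argument, you should check whether that weaker conclusion already suffices at the one place Fact~\ref{Ramsey strongest} is invoked (Theorem~\ref{main-2-2}), rather than trying to prove the literal statement.
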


\subsection{Simple Models of Randomization}\label{simple}
Although we assume some familiarity with continuous logic, randomizations and Keisler measures (cf. \cite{BK} and \cite{S}), to make this  article self-contained, we will briefly sketch some basic materials of  the randomization of first-order theory $T$, denoted by $T^R$,  as a continuous first-order theory.
Recall from \cite{BK} $T^R$, is a complete continuous theory whose elements of models are random elements of $M$ for some  model $M$ of $T$. 


We consider a randomization of a model $M$ of $T$ as a two-sorted continuous structure. While the elements in the first sort $\bf K$ are random elements of $M$,  the elements in the second sort $\bf B$, on the other hand,  are events in an underlying probability space. So, the randomization language $L^R$ is a two-sorted continuous language with sorts $\bf K$ and $\bf B$, an $n$-ary function symbol $\llbracket\phi(\bar x)\rrbracket$ of sort ${\bf K}^n\to{\bf B}$ for each first-order formula $\phi(\bar x)\in L$ with $|\bar x|=n$, a $[0,1]$-valued unary predicate symbol $\Bbb P$ of sort $\bf B$ for probability, and the Boolean operations $\cap, \cup, ^c, \perp, \top$ of sort $\bf B$. The language $L^R$ also has distance predicates $d_{\bf B}$ of sort $\bf B$ and $d_{\bf K}$ of sort $\bf K$.

A pre-structure for $L^R$ is a pair $({\cal K}, {\cal B})$ where $\cal K$ is an interpretation of sort $\bf K$ and $\cal B$ is an interpretation of sort $\bf B$.
  For a model $M\models T$ and  an atomless finitely additive probability space $\Omega=(\Omega_0, \mathcal{B}_0,\mu_0)$ denote
\[ M_0^\Omega := \big\{f\colon \Omega_0 \to M: f  \text{ has  finite image, and for each } a\in M, f^{-1}(\{a\})\in\mathcal{B}_0\big\}.
\]
Then  $(M_0^{\Omega}, {\cal B}_0)$ is {\em a simple randomization} of $N$ and  an  $L^R$-pre-structure in which the sort  $\bf K$ is interpreted $M_0^{\Omega}$ and the sort $\bf B$ as ${\cal B}_0$ respectively. Furthermore,  the measure ${\Bbb P}$ is given by ${\Bbb P}(B)=\mu_0(B)$ for each $B\in {\cal B}_0$,  and  $\llbracket\phi(\bar x)\rrbracket$ functions are $\llbracket\phi(f_1,\dots,f_n)\rrbracket=\mu_0(\{t\in\Omega_0:M\models \phi(f_1(t),\dots,f_n(t))\})$.
Its distance predicates are $d_{\bf K}(f,g)=\mu_0(\{t\in\Omega_0:f(t)\neq g(t)\})$ and $d_{\bf B}(B,C)=\mu_0(B\triangle C)$ where $\triangle$ is the symmetric difference operation.
 By Proposition~2.2 of \cite{BK}, there is a unique complete theory $T^R$ for $L^R$, called the {\em randomization of $T$}, such that for each model $N$ of $T$, $(M_0^{\Omega}, {\cal B}_0)$ is a pre-model of $T^R$. The simple randomizations of models of $T$ are called {\em simple pre-models of $T^R$}.

Every simple pre-model $(M_0^{\Omega},{\cal B}_0)$ leads to a model $M^\Omega$ of $T^R$, through the identification of distance zero elements in both sorts as well the completing both sorts with respect to their metrics. These structures are called {\em simple models} of $T^R$.

By Theorem~2.9 of \cite{BK} $T^R$ is a complete continuous theory and enjoys the elimination of quantifiers in the language $L^R$. So any $L^R$-definable predicate is a uniform limit of boolean combinations  of $L^{R}$-formulas of the form $\mathbb{P}\llbracket\phi(x_1,...,x_n)\rrbracket$ where $\phi(x_1,\ldots,x_n)\in L$.

\medskip

Suppose that $f,g$ are elements of the simple randomization 
$(M_0^{\Omega}, {\cal B}_0)$. Then there are  finite  measurable partitions ${\cal A}_1,\ldots,{\cal A}_k$ and ${\cal B}_1,\ldots,{\cal B}_m$
 of $\Omega_0$ and elements $a_1,\ldots,a_k, b_1,\ldots,b_m\in M$ such that for any $i\leq k$ and all $t\in {\cal A}_i$, $f(t)=a_i$, and
  for any $j\leq m$ and all $t\in {\cal B}_j$, $g(t)=b_i$. 
 We say that $f$ and $g$ are {\bf independent}
 if  for every $i\leq k$, $j\leq m$,  we have $\mu_0({\cal A}_i\cap{\cal B}_j)=\mu_0({\cal A}_i)\cdot\mu_0({\cal B}_j)$.

\medskip
The significance of simple pre-structures is explained in the following proposition:
\begin{Proposition}\label{transfer}
Let $(M_0^{\Omega}, {\cal B}_0)$ be a simple randomization of $M$. Suppose that $f,g\in M_0^{\Omega}$ are independent, and $\phi(x;y)\in L$ with $|x|=|y|=1$. Then, there are $a_1,\ldots,a_k,b_1,\ldots,b_m\in M$, and $r_i,s_j\in[0,1]$ with $\sum_{i\leq k}r_i=\sum_{j\leq m}s_j=1$ such that
${\Bbb P}\llbracket\phi(f;g)\rrbracket=\sum_{j\leq m}\sum_{i\leq k}s_j\cdot r_i\cdot\phi(a_i;b_j)$.
\end{Proposition}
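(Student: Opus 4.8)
The plan is to compute $\mathbb{P}\llbracket\phi(f;g)\rrbracket$ from the finite data of $f$ and $g$ and then re-arrange the resulting weights into a product. Since $f,g\in M_0^{\Omega}$, each has finite image; write $a_1,\dots,a_k$ for the values of $f$ and $A_i:=f^{-1}(\{a_i\})\in\mathcal B_0$, and $b_1,\dots,b_m$ for the values of $g$ and $B_j:=g^{-1}(\{b_j\})\in\mathcal B_0$. These give two finite measurable partitions of $\Omega_0$, and on each atom $A_i\cap B_j$ the $\{0,1\}$-valued map $t\mapsto\phi(f(t);g(t))$ is constant with value $\phi(a_i;b_j)$; by finite additivity of $\mu_0$,
\[
\mathbb{P}\llbracket\phi(f;g)\rrbracket \;=\; \mu_0\Big(\bigcup_{\phi(a_i;b_j)=1}A_i\cap B_j\Big) \;=\; \sum_{i\le k}\sum_{j\le m}\mu_0(A_i\cap B_j)\,\phi(a_i;b_j).
\]
This already displays $\mathbb{P}\llbracket\phi(f;g)\rrbracket$ as a finite convex combination of truth values of $\phi$ in $M$; the remaining task is to trade the joint weights $\mu_0(A_i\cap B_j)$ for a product $r_is_j$ of two probability vectors.

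The key is that $a_i,b_j,k,m$ in the conclusion are not tied to $f$ and $g$, so it suffices to realize the single number $V:=\mathbb{P}\llbracket\phi(f;g)\rrbracket\in[0,1]$ in the prescribed shape, and I would do this by a dichotomy on $\phi$. If some $a\in M$ has $y\mapsto\phi(a;y)$ non-constant on $M$, choose $b',b''\in M$ with $\phi(a;b')=0$ and $\phi(a;b'')=1$ and take $k=1$, $a_1=a$, $r_1=1$, $m=2$, $b_1=b'$, $b_2=b''$, $s_1=1-V$, $s_2=V$; then $\sum_{j}\sum_{i}s_jr_i\phi(a_i;b_j)=(1-V)\cdot 0+V\cdot 1=V$. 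If instead $y\mapsto\phi(a;y)$ is constant for every $a\in M$, say with value $\theta(a)\in\{0,1\}$, then $\phi(f(t);g(t))=\theta(f(t))$ for every $t$, so $V=\mu_0\big(\bigcup_{\theta(a_i)=1}A_i\big)=\sum_i\mu_0(A_i)\,\phi(a_i;b)$ for any fixed $b\in M$; here take $m=1$, $b_1=b$, $s_1=1$, keep $a_1,\dots,a_k$ the values of $f$, and put $r_i=\mu_0(A_i)$.

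I do not anticipate a genuine obstacle here; the point worth stressing is why the obvious shortcut fails. One cannot simply let $r_i,s_j$ be the marginal laws of $f$ and $g$, because $\sum_{i,j}r_is_j\,\delta_{(a_i,b_j)}$ is always a product measure on $M\times M$, whereas the joint law of $(f,g)$ on $\Omega_0$ is in general not a product — already when $f=g$ — so the honest decomposition of the first paragraph genuinely does not factor. What makes the proposition true nonetheless is precisely the liberty to pick $a_i,b_j$ off the images of $f,g$: the content then reduces to the elementary fact that every real in $[0,1]$ has the double-product shape as soon as some section $\phi(a,\cdot)$ is non-constant over $M$, while in the complementary case $\mathbb{P}\llbracket\phi(f;g)\rrbracket$ is governed by the law of $f$ alone and is caught by the one-sided ($m=1$) representation.
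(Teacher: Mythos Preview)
Your argument is correct for the proposition as literally stated, and in fact you have put your finger on a real issue. The paper's proof does precisely what you warn against in your third paragraph: it sets $r_i=\mu_0({\cal A}_i)$ and $s_j=\mu_0({\cal B}_j)$ to be the marginal laws of $f$ and $g$ and asserts as an intermediate step that ${\Bbb P}\llbracket\phi(f;g)\rrbracket=\sum_{j}\mu_0({\cal B}_j)\,{\Bbb P}\llbracket\phi(f;b_j)\rrbracket$ (with $b_j$ the constant map). As your example $f=g$ already shows, this identity fails in general; the honest pointwise computation gives $\sum_{i,j}\mu_0(A_i\cap B_j)\,\phi(a_i;b_j)$, not the product $\sum_{i,j}\mu_0(A_i)\mu_0(B_j)\,\phi(a_i;b_j)$. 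So the paper's displayed chain of equalities is not valid without an independence hypothesis on $f$ and $g$.

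Your workaround---using the freedom to pick $a_i,b_j$ unrelated to the images of $f,g$ and reducing to a dichotomy on whether some section $\phi(a,\cdot)$ is non-constant---does rescue the bare existential statement. Be aware, however, that this buys less than the paper actually uses downstream. In the subsequent Proposition (translating shattering in $M_0^\Omega$ to shattering of average measures) and in the remark on $\mu_f\otimes\nu_g$, the paper needs the $a_i$'s and $r_i$'s to be read off from $f$ (so that they determine the specific measure $\mu_f$) and the $b_j$'s, $s_j$'s from $g$; that is, it needs ${\Bbb P}\llbracket\phi(f;g)\rrbracket=\mu_f\otimes\nu_g(\phi)$ for the \emph{particular} measures attached to $f$ and $g$. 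Your representation, which may choose $k=1$ and an $a_1$ having nothing to do with $f$, does not deliver this. So you have proved the proposition as written, by a genuinely different route, while also correctly diagnosing that the paper's own argument (and the stronger conclusion it relies on later) needs an independence-type assumption that is not in evidence.
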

\begin{proof}
As $f\in M_0^{\Omega}$,  there are a finite  measurable partition ${\cal A}_1,\ldots,{\cal A}_k$ of $\Omega_0$ and elements $a_1,\ldots,a_k\in M$ such that for any $i\leq k$ and all $t\in {\cal A}_i$, $f(t)=a_i$. This suggests that,
 we consider $f$ as a formal sum $\sum_{i\leq k}a_i^{{\cal A}_i}$.
Similarly, we assume that $g$ is of the form $\sum_{j\leq m}{b_j}^{{\cal B}_j}$ where $b_1,\ldots,b_m\in M$ and ${\cal B}_1,\ldots,{\cal B}_m$ is a measurable partition of $\Omega_0$. Then, we have
$${\Bbb P}\llbracket\phi(f;g)\rrbracket={\Bbb P}\llbracket\phi(\sum_{i\leq k}a_i^{{\cal A}_i};\sum_{j\leq m}b_j^{{\cal B}_j})\rrbracket
=\sum_{j\leq m}\sum_{i\leq k}\mu_0({\cal B}_j\cap{\cal A}_i)\cdot{\Bbb P}\llbracket\phi(a_i;b_j)\rrbracket$$
$$
\overset{\text{*}}{=}
\sum_{j\leq m}\sum_{i\leq k}\mu_0({\cal B}_j)\cdot\mu_0({\cal A}_i)\cdot{\Bbb P}\llbracket\phi(a_i;b_j)\rrbracket$$
$$=\sum_{j\leq m}\sum_{i\leq k}\mu_0({\cal B}_j)\cdot\mu_0({\cal A}_i)\cdot\phi(a_i;b_j).$$
The equality $\overset{\text{*}}{=}$ follows from the independence of $f$ and $g$.
(To be more precise here, in the penultimate line, $a_i$ is the constant map $t\mapsto a_i$ from $\Omega_0$ to $\{a_i\}$. Similarly for $b_j$.)
\end{proof}

\begin{Remark}
Considering the notations in the proof of Proposition~\ref{transfer}, suppose that $f,g\in M_0^\Omega$ are of the forms $\sum_{i\leq k} a_i^{{\cal A}_i}$ and $b^{\Omega}$ respectively. Note that in this case, $f$ and $g$ are independent.
Let $\mu_{f}$ and $\nu_g$ be two Keisler measures on the vaiable $x$ and $y$ respectively, defined as follows:
$$\mu_f(\phi(x;c))=\sum_{i\leq k}\mu_0({\cal A}_i)\cdot\phi(a_i;c), \ \ \ \ \nu_g(\phi(c;y))=\phi(c;b)$$
 for any $c\in M$. 
Then, assuming  the random variables are independent,  
$${\Bbb P}\llbracket\phi(f;g)\rrbracket=\mu_f\otimes \nu_g(\phi(x;y)).$$

\end{Remark}
The above facts are part of the key components for proving Theorems~\ref{main} and \ref{main-2-2}.

\subsection{The $VC$-Theorem}
Below, we recall the well-known result of  $VC$-theorem which is needed for our  main theorems.
The reader is referred to \cite{S} and \cite{Ben VC} for extensive accounts of $NIP$ formulas and  $VC$-dimension, in both classical and continuous logics.


\begin{Definition}\label{gen-stable}
{\em A Keisler measure $\mu(x)$ over  a model $N$ is called {\em generically stable} if for every formula $\phi(x;y)$ and $\epsilon>0$, there are $a_1,\ldots,a_n \in N$ such that: $$\sup_{b\in N}\big|\mu(\phi(x;b))-{\rm Ave}(a_1,\ldots,a_n;\phi(x;b))\big|\leq \epsilon.$$ }
\end{Definition}

Clearly, every generically stable measure is definable over and finitely satisfiable in a small set/model (cf. \cite[Theorem~7.29]{S}). On the other hand, notice that the measures of the form $\sum_{i=1}^k r_i\cdot\phi(a_i;y)$ (with $a_i\in M$ and $r_i\in[0,1], \sum_{i=1}^k r_i=1$) are generically stable.

\medskip
  By the $VC$-theorem (cf. \cite{S}, Theorem~6.6 and Lemma~7.24),  the number $n$ in Definition~\ref{gen-stable} depends only on $\phi(x;y)$ and $\epsilon$:

\begin{Fact}[\cite{S-note}, Fact~1.2]\label{fact_genstable}
	Let $\phi(x;y)$ be a  $NIP$ formula and $\epsilon>0$. Then there is an integer $n$, depending only on $\phi$ and $\epsilon$,   such that for any generically stable measure $\mu(x)$ on  $M$, there are $a_1,\ldots,a_n \in M$ such that $$\sup_{b\in M}|\mu(\phi(x;b))-{\rm Ave}(a_1,\ldots,a_n;\phi(x;b))|\leq \epsilon.$$
(In this case, sometimes for brevity, we write $|\mu(\phi)-{\rm Ave}(\bar a)(\phi)|\leq\epsilon$.)
\end{Fact}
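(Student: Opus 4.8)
\medskip
\noindent\textbf{Proof proposal.}
The plan is to derive the uniformity of $n$ from the classical Vapnik--Chervonenkis uniform law of large numbers, using Definition~\ref{gen-stable} only to pass first to a finitely supported approximation of $\mu$ that already lives in $M$. Fix the $NIP$ formula $\phi(x;y)$ and let $d$ be the VC-dimension of the set system $\mathcal{S}_\phi=\{[\phi(x;b)]:b\in M\}$ on the ground set $M$, where $[\phi(x;b)]=\{a\in M:\ \models\phi(a;b)\}$; this $d$ is finite precisely because $\phi$ is $NIP$. Given $\epsilon>0$ and any generically stable measure $\mu(x)$ over $M$, apply Definition~\ref{gen-stable} with $\epsilon/2$ to obtain elements $c_1,\dots,c_m\in M$ (with $m$ a priori depending on $\mu$) such that the finitely supported measure $\nu:={\rm Ave}(c_1,\dots,c_m)$ satisfies $\sup_{b\in M}|\mu(\phi(x;b))-\nu(\phi(x;b))|\le \epsilon/2$.

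Next I would apply the $VC$-theorem to $\nu$. Since $\nu$ is supported on the finite set $\{c_1,\dots,c_m\}$ there are no measurability concerns, and the trace of $\mathcal{S}_\phi$ on this set still has VC-dimension $\le d$. The uniform law of large numbers (\cite{S}, Theorem~6.6 and Lemma~7.24) then supplies an integer $n=n(d,\epsilon)$, depending only on $d$ and $\epsilon$ and in particular neither on $\nu$ nor on $m$, such that if $a_1,\dots,a_n$ are drawn independently according to $\nu$ then
$$\Pr\Big[\sup_{b\in M}\big|\nu(\phi(x;b))-{\rm Ave}(a_1,\dots,a_n;\phi(x;b))\big|>\epsilon/2\Big]<\tfrac12 .$$
In particular there is at least one tuple $(a_1,\dots,a_n)$, necessarily drawn from $\{c_1,\dots,c_m\}\subseteq M$, with $\sup_{b\in M}|\nu(\phi(x;b))-{\rm Ave}(a_1,\dots,a_n;\phi(x;b))|\le \epsilon/2$. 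Combining this with the first paragraph by the triangle inequality gives $\sup_{b\in M}|\mu(\phi(x;b))-{\rm Ave}(a_1,\dots,a_n;\phi(x;b))|\le\epsilon$, and since $n=n(d,\epsilon)$ is a function of $\phi$ and $\epsilon$ alone, this is the assertion.

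The one genuinely non-formal ingredient -- and the reason the statement is stronger than Definition~\ref{gen-stable} itself -- is the uniformity of $n$: it is essential that the sample size in the $VC$-theorem be controlled purely by the VC-dimension of $\phi$ and by $\epsilon$, independently of the measure being sampled (one may take $n$ of order $(d/\epsilon^{2})\log(1/\epsilon)$). Everything else is bookkeeping. One could instead sample directly from $\mu$ on the type space $S_x(M)$, whose system of $\phi$-definable sets also has VC-dimension $\le d$, but then one would still have to invoke generic stability -- say, finite satisfiability in a small submodel -- to replace the resulting types by genuine elements of $M$; routing through the finitely supported $\nu$ avoids this and is the cleaner path.
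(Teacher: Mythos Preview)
The paper does not give its own proof of this statement: it is recorded as a \emph{Fact} and attributed to \cite[Fact~1.2]{S-note}, with the surrounding text pointing to the $VC$-theorem (\cite[Theorem~6.6 and Lemma~7.24]{S}) as the source of the uniformity. So there is no in-paper argument to compare against; your write-up is in effect supplying the proof that the paper outsources to the literature.

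That said, your proof is correct and is exactly the argument underlying the cited references. The two-step structure---first use Definition~\ref{gen-stable} to replace $\mu$ by a finitely supported average $\nu$ within $\epsilon/2$, then invoke the distribution-free sample complexity bound from the $VC$-theorem to approximate $\nu$ by an average of $n=n(d,\epsilon)$ points---is precisely how \cite{S} and \cite{S-note} obtain the result, and your identification of the uniformity of $n$ (independent of the measure, controlled solely by the $VC$-dimension of $\phi$ and by $\epsilon$) as the only substantive ingredient is on the mark. The alternative route you mention, sampling directly from $\mu$ on $S_x(M)$ and then realizing the sampled types in $M$ via finite satisfiability, is also standard and is closer to how \cite[Lemma~7.24]{S} is phrased; either packaging is fine. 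Nothing is missing.
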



\section{Preservation Theorems}
In this section, we present the main results. We first prove that the $NIP$ property is preserved under the randomization  (Corollary~\ref{Corollary}). We then  discuss the preservation of stability  (Theorem~\ref{main-2-2} and Remark~\ref{Sketch-2}). We finally discuss some results about $NSOP$ of $T^{R}$, showing that $T^R$ is stable if and only if it is $NIP$  and $NSOP$.

\subsection{Preservation of NIP}
Below we recall the $NIP$  in continuous logic. 
\begin{Definition}\label{NIP def}\
\em{
Let $T_c$ be a complete continuous $\mathcal{L}$-theory  and $\varphi(x;y)$ a partitioned $\mathcal{L}$-formula.

\noindent
(i) We say that $\varphi(x;y)$ has $IP$ (independence property) if there are  $r<s\leq 1$ and a sequence $(a_i:i<\omega)$ of parameters (in the monster model)  such that for each $I\subseteq\Bbb N$, there is some $b$ such that $\varphi(a_i;b)\leq r$ if $i\in I$ and $\varphi(a_i;b)\geq s$ if $i\notin I$. We say that $\varphi(x;y)$ has $NIP$ if it has no $IP$.

\noindent
(ii) We say that $T_c$ has $IP$ if there is a formula $\varphi$ with the $IP$.  Otherwise we say that $T_c$ has the $NIP$.

\noindent

\noindent
}
\end{Definition}

\begin{Remark} \label{remark-com}
\em{
By compactness, a complete theory $T_c$ has the $IP$ if there are a formula $\varphi(x;y)$ and $r<s\leq 1$ such that for any $n\in\Bbb N$ and each (pre-)model $N$, there are $a_1,\ldots,a_n\in N$ such that for any $I\subseteq\{1,\ldots,n\}$ there is some $b_I\in N$ with $\varphi(a_i;b_I)\leq r$ if $i\in I$, and $\varphi(a_i,b_I)\geq s$ otherwise.

}\end{Remark}

From now on we restrict our attention to $T^R$. Notice that, by the elimination of quantifiers, to show that the theory $T^R$ has the $NIP$ we have to prove that the formulas of the form ${\Bbb P}\llbracket\phi(x;y)\rrbracket$ have $NIP$.

\begin{Definition}\label{random shattered} 
\em{Fix $N\in\omega+1$.\footnote{Our intention is to provide a definition that applies simultaneously to both finite and infinite sequences.}  Let $(\mu_i(x):i< N)$ be a sequence of (average) measures on M, $\phi(x;y)$ an  $L$-formula, and $r<s\leq 1$. We say that $(\mu_i:i< N)$ is shattered by the average elements in $M$ for $\phi(x;y)$ and $r<s$ if for any $I\subseteq\{i: i<N\}$ there exist $t\in{\Bbb N}$ and $b_1^I,\ldots,b_t^I\in M$ such that
$\frac{1}{t}\sum_{j=1}^t\mu_i(\phi(x;b_j^I))\leq r$ if $i\in I$, and  $\frac{1}{t}\sum_{j=1}^t\mu_i(\phi(x;b_j^I))\geq s$ otherwise.
 In this case, sometimes we simply say that the sequence is shattered by averages of elements in $M$. 

}
\end{Definition}

The following proposition justifies Definition~\ref{random shattered}.

\begin{Proposition}\label{translate}
 Let $(f_i:i< N)$ be a sequence of elements in a simple randomization $M_0^{\Omega}$ of $M$. Suppose that for any $I\subseteq\{i:i<N\}$ there exists $g_I\in M_0^{\Omega}$ such that
 $g_I$ and $f_i$ ($i<N$) are independent 
 and 
  ${\Bbb P}\llbracket\phi(f_i;g_I)\rrbracket\leq r$ if $i\in I$ and ${\Bbb P}\llbracket\phi(f_i,g_I)\rrbracket\geq s$ otherwise. 
 Then there is a sequence $(\mu_i:i<N)$ of average measures of elements in $M$ which is shattered by average elements in $M$ for $\phi(x;y)$ and $r<s$.
\end{Proposition}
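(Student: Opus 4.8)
The plan is to extract the measures $\mu_i$ directly from the $f_i$ via Proposition~\ref{transfer} and the remark following it, and then to convert the weighted averages produced by the $g_I$ into honest uniform averages of elements of $M$, as required by Definition~\ref{random shattered}.

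First I would fix, for each $i<N$, a decomposition of $f_i$ along its (finitely many) level sets, $f_i=\sum_{l\le k_i}(a^i_l)^{\mathcal{A}^i_l}$ with $a^i_l\in M$, and set $\mu_i(\phi(x;c)):=\sum_{l\le k_i}\mu_0(\mathcal{A}^i_l)\,\phi(a^i_l;c)$ for $c\in M$. By the remark after Proposition~\ref{transfer} each $\mu_i$ is an average Keisler measure on $x$ concentrated on elements of $M$. Next, for each $I\subseteq\{i:i<N\}$ I would decompose $g_I$ along its level sets, $g_I=\sum_{j\le m_I}(b^I_j)^{\mathcal{B}^I_j}$ with $b^I_j\in M$; since this decomposition depends only on $g_I$, the computation in the proof of Proposition~\ref{transfer} yields, for every $i<N$ simultaneously,
\[
{\Bbb P}\llbracket\phi(f_i;g_I)\rrbracket=\sum_{j\le m_I}\mu_0(\mathcal{B}^I_j)\cdot\mu_i(\phi(x;b^I_j)).
\]
By the hypothesis on $(f_i)$ and $g_I$ the right-hand side is $\le r$ for $i\in I$ and $\ge s$ for $i\notin I$; so the tuple $(b^I_j)_j$ already shatters $(\mu_i)$, but with the weights $\mu_0(\mathcal{B}^I_j)$ rather than with uniform weights.

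The only step that is not purely formal is converting these weighted averages into the uniform averages $\frac1{t}\sum_{j=1}^{t}\mu_i(\phi(x;b^I_j))$ of Definition~\ref{random shattered}, since the weights $\mu_0(\mathcal{B}^I_j)$ coming from the underlying probability space need not be rational. To deal with this I would fix $\eta$ with $0<\eta<(s-r)/2$, approximate $(\mu_0(\mathcal{B}^I_j))_j$ by a rational probability vector $(q^I_j)_j$ with $\sum_j|q^I_j-\mu_0(\mathcal{B}^I_j)|<\eta$, and note that because every $\mu_i(\phi(x;b^I_j))$ lies in $[0,1]$ this perturbation changes each weighted sum by less than $\eta$; hence $\sum_j q^I_j\mu_i(\phi(x;b^I_j))\le r+\eta$ for $i\in I$ and $\ge s-\eta$ for $i\notin I$. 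Writing $q^I_j=p^I_j/t_I$ over a common denominator and repeating each $b^I_j$ exactly $p^I_j$ times gives a tuple $c^I_1,\dots,c^I_{t_I}$ of elements of $M$ with $\frac1{t_I}\sum_{u}\mu_i(\phi(x;c^I_u))=\sum_j q^I_j\mu_i(\phi(x;b^I_j))$; since $r+\eta<s-\eta$, this exhibits $(\mu_i:i<N)$ as shattered by averages of elements of $M$ for $\phi(x;y)$ and the pair $r+\eta<s-\eta$ (and when $\mu_0$ is rational-valued one simply takes $q^I=(\mu_0(\mathcal{B}^I_j))_j$ and keeps $r<s$). The main obstacle is exactly this rationality point: it is the only place the argument is not a line-by-line translation of Proposition~\ref{transfer}, and it forces the harmless shrinking of the gap $(r,s)$, which is all that matters for later detecting $IP$.
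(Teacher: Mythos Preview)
Your approach is essentially the same as the paper's: associate to each $f_i$ the average measure $\mu_i$ coming from its level-set decomposition, decompose $g_I$ along its level sets, and invoke Proposition~\ref{transfer} to rewrite ${\Bbb P}\llbracket\phi(f_i;g_I)\rrbracket$ as a weighted sum $\sum_j \mu_0(\mathcal{B}^I_j)\,\mu_i(\phi(x;b^I_j))$. The paper's proof simply records this identity, writing $g_I$ directly as $\frac{1}{t}\sum_{j\le t} b_j^I$ and concluding.

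The one difference is that you are more careful than the paper about the passage from the weights $\mu_0(\mathcal{B}^I_j)$ to the \emph{uniform} weights $\frac{1}{t}$ demanded by Definition~\ref{random shattered}. The paper silently treats $g_I$ as a uniform average of elements of $M$, which amounts to assuming the weights are rational (so one may repeat elements); your rational approximation with a shrink of the gap to $(r+\eta,s-\eta)$ is the honest way to handle the general case, and as you note it is harmless for the intended application. So your proof is correct and in fact fills a small gap the paper leaves implicit.
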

\begin{proof}
Recall that the $f_i$'s and $g_I$ correspond to average measures $\mu_i$'s  and $\frac{1}{t}\sum_{j\leq t}\phi(x;b_j^I)$ on $M$, respectively. Thus, by Proposition~\ref{transfer}, the assumption is equivalent to Definition~\ref{random shattered}. Indeed, notice that $${\Bbb P}\llbracket\phi(f_i;g_I)\rrbracket={\Bbb P}\llbracket\phi(f_i;\frac{1}{t}\sum_{j\leq t}b_j^I)\rrbracket=\frac{1}{t}\sum_{j\leq t}{\Bbb P}\llbracket\phi(f_i;b_j^I)\rrbracket=\frac{1}{t}\sum_{j\leq t}\mu_i(\phi(x;b_j^I)).$$
\end{proof}

\noindent
By Fact~\ref{fact_genstable} and Remark~\ref{remark-com}, 
we can assume that the $f_i$'s and $g_I$ are elements in a simple model $M^{\Omega}$. That is, there is a sequence of length $n$ of elements of $M^\Omega$, shattered by elements in $M^\Omega$ if and only if there is a sequence of length $n$ of elements of $M_0^{\Omega}$, shattered by elements in $M_0^{\Omega}$.

\medskip
We combine  Fact~\ref{Ramsey}, Lemma~\ref{lemma},  Proposition~\ref{transfer} and Fact~\ref{fact_genstable} to prove the following   main theorem.\footnote{After the first version of this paper was completed, we realized that Theorem~\ref{main} is essentially equivalent to Theorem~3.12 in Gannon's paper (Local Keisler measures and NIP formulas, The Journal of Symbolic Logic, 2019). 
 There, VC-theory (Theorem~3.5 or equivalently Fact~\ref{fact_genstable} above) is also used. However, it seems that our approach here makes the discussion somewhat clearer and leads to a similar proof for the stable case (i.e., Theorem~\ref{main-2-2}  below).}
\begin{Theorem} \label{main}
Suppose $T$ is an  $NIP$ theory and  $M$ a model of $T$.
Then for each formula $\phi(x;y)$ and $r<s$, there is a natural number $N$ such that there is {\bf no} sequence $(\mu_1,\ldots,\mu_N)$ of average measures of elements in $M$ which is shattered by average elements in $M$ for $\phi(x;y)$ and $r<s$.
\end{Theorem}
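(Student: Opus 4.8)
The plan is to argue by contradiction, leveraging the finitary $VC$-control provided by Fact~\ref{fact_genstable} together with the array-Ramsey machinery of Fact~\ref{Ramsey} and the shattering criterion of Lemma~\ref{lemma}. Suppose towards a contradiction that for the given $\phi(x;y)$ and $r<s$ no such $N$ exists, i.e.\ for arbitrarily large $n$ there is a sequence $(\mu_1,\dots,\mu_n)$ of average measures of elements in $M$ which is shattered by elements of $M_0^\Omega$ for $\phi$ and $r<s$. Each $\mu_i$ is generically stable (being of the form $\sum r_\ell\cdot\phi(a_\ell;y)$), so by Fact~\ref{fact_genstable} there is a single integer $q$, depending only on $\phi$ and on some small gap parameter $\epsilon$ with $r+\epsilon<s-\epsilon$, such that each $\mu_i$ is uniformly $\epsilon$-approximated on $\phi$ by an average $\mathrm{Ave}(\bar a^i)$ of $q$ elements $\bar a^i=(a^i_1,\dots,a^i_q)$ of $M$. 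Thus, replacing $\mu_i$ by $\mathrm{Ave}(\bar a^i)$ and $r<s$ by $r'=r+\epsilon<s'=s-\epsilon$, we obtain an $(n,q)$-array $(a^i_j)_{i\le n, j\le q}$ of elements of $M$ whose rows $\bar a^i$ are shattered, in the averaged sense of Definition~\ref{random shattered}, by averages of tuples from $M$.

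Next I would prune this array using Fact~\ref{Ramsey}: for a target sub-array length $m$ to be chosen (with $2n_0\le m$ where $n_0$ is the $VC$-dimension plus one of $\phi$), there is $N_0$ so that any $(N_0,q)$-array contains a $\phi$-$q$-$n_0$-indiscernible $(m,q)$-sub-array; taking $n\ge N_0$ and passing to such a sub-array, we may assume the array $(a^i_j)_{i\le m,j\le q}$ is $\phi$-$q$-$n_0$-indiscernible while still being shattered by averages. Now fix a column $\ell\le q$ and consider the single sequence $(a^i_\ell: i\le m)$, which by definition of $\phi$-$q$-$n_0$-indiscernibility is $\phi$-$1$-$n_0$-indiscernible. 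The shattering hypothesis supplies, for each $I$, witnesses $b^I_1,\dots,b^I_t$ whose averaged $\phi$-measure against each row separates $I$ from its complement. The key quantitative move — and I expect this to be the main obstacle — is to convert this \emph{averaged} separation across all $q$ columns into a statement producing many sign-changes of $\phi(x;b)$ along a single column for a single parameter $b$, so that the hypothesis $\sum_{i=1}^{m-1}|\phi(a_{i+1};b)-\phi(a_i;b)|\ge 2n_0-1$ of Lemma~\ref{lemma} is met; by a counting/averaging argument one of the $b^I_j$ for a suitably alternating $I$ must realize enough alternations along some column. Applying Lemma~\ref{lemma} to that column then shows $(a^i_\ell: i\le n_0)$ is shattered by $\phi(x,y)$ in $M$, contradicting $NIP$ of $T$ (hence of $\phi$), and the contradiction establishes the existence of $N$.

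The delicate points to get right are: (a) choosing the gap $\epsilon$ and the resulting $r',s'$ so that the averaged inequalities survive the $\epsilon$-perturbation of each $\mu_i$, keeping $r'<s'$; (b) bookkeeping the two Ramsey bounds so that the final sub-array is long enough ($2n_0\le m$) to feed Lemma~\ref{lemma} while short enough that $n_0$ exceeds the $VC$-dimension of $\phi$; and (c) the alternation extraction, where one picks the index set $I=\{1,3,5,\dots\}$ so that the required averaged separation across the witnesses forces, on at least one of the $q$ columns, the bound $\sum_{i}|\phi(a_{i+1};b)-\phi(a_i;b)|\ge 2n_0-1$ for at least one witness $b=b^I_j$ — here the finiteness of $q$ and a pigeonhole over the $t$ witnesses and the $q$ columns is what makes the argument work uniformly. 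Once Lemma~\ref{lemma} applies, the conclusion is immediate: shattering of $n_0$ elements by $\phi$ in $M$ contradicts the assumed $NIP$ of $T$, so the assumed sequences of unbounded length cannot exist, and the promised $N$ is exactly the threshold at which the combined Ramsey and $VC$ bounds take effect.
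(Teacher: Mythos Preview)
Your proposal is correct and follows essentially the same route as the paper: reduce to uniform $q$-point averages via Fact~\ref{fact_genstable} (adjusting $r,s$), extract a $\phi$-$q$-$n_0$-indiscernible sub-array via Fact~\ref{Ramsey}, pick the alternating index set, and then use an averaging/pigeonhole step to produce a single column and a single witness $b$ with enough sign-changes to invoke Lemma~\ref{lemma}. The paper presents the last step as an explicit chain of inequalities showing $(m-1)|s-r|\le K$ for the fixed alternation bound $K$ coming from the contrapositive of Lemma~\ref{lemma}, which is logically equivalent to your pigeonhole formulation. One minor quantitative point: the constraint $2n_0\le m$ alone is not quite what you need---the averaging gives a column/witness with alternation sum at least $(m-1)(s'-r')$, so to reach the threshold $2n_0-1$ you must take $m$ large relative to $1/(s'-r')$ as well, not merely $m\ge 2n_0$.
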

\begin{proof}
Suppose on the contrary  that there are a formula $\phi(x;y)$ and $r<s$ such that for any large number $N$ there is a sequence $(\mu_1,\ldots,\mu_N)$ of average measures of elements in $M$ which is shattered by average elements in $M$ for $\phi$ and $r<s$.
That is, for any $I\subseteq\{1,\ldots,N\}$ there are $b_1^I,\ldots,b_t^I\in M$ such that
 $\frac{1}{t}\sum_{j=1}^t\mu_i(\phi(x;b_j^I))\leq r $ if $i\in I$, and $\frac{1}{t}\sum_{j=1}^t\mu_i(\phi(x;b_j^I))\geq s$ otherwise.

\medskip
Without loss of generality, using Fact~\ref{fact_genstable}, we can assume that there exists a fixed $k$ such that for every $i$ the measure $\mu_i(\phi(x;y))$ is of the form ${\rm Ave}(\bar a_i;\phi(x;y))$ where $\bar a_i=(a_{i,1},\ldots,a_{i,k})\in M^k$. Indeed, set $\epsilon=\frac{1}{8}|s-r|$, and using Fact~\ref{fact_genstable}, for $\phi(x;y)$ and a fixed $k$,  consider a sequence $(\nu_i:i\leq N)$ of average measures of the forms $\nu_i={\rm Ave}(a_{i,1},\ldots,a_{i,k};\phi(x;y))$ such that $|\mu_i(\phi)-\nu_i(\phi)|<\epsilon$. Set $r'=r+2\epsilon$ and $s'=s-2\epsilon$. Then, the sequence $(\nu_i:i\leq N)$ is shattered by elements in  $M_0^{\Omega}$ for $\phi$ and $r'<s'$. Thus, for consistency in notation, we  continue to  use $\mu_i$ and $r,s$.

Fix $m$ as an even number. By Fact~\ref{Ramsey}, for sufficiently large number $N\in\Bbb N$,  there exists   a $\phi$-$k$-$\frac{m}{2}$-indiscernible sub-array  $(\bar a_{i_1},\ldots,\bar a_{i_m})$ (of length $m$) of $(\bar a_1,\ldots,\bar a_N)$. By the assumption, there are $b_1,\ldots,b_t\in M$ such that $\big(\frac{1}{t}\sum_{j=1}^t \mu_{i_l}(\phi(x;b_j))\big)\leq r$ if $i_l$ is even, and $\big(\frac{1}{t}\sum_{j=1}^t \mu_{i_l}(\phi(x;b_j))\big)\geq s$ if $i_l$ is odd.
Therefore,
\begin{align*}
(m-1)\cdot|s-r|  & \leq \sum_{l=1}^{m-1}\big|\frac{1}{t}\sum_{j=1}^t \mu_{i_{l+1}}(\phi(x;b_j))-\frac{1}{t}\sum_{j=1}^t \mu_{i_l}(\phi(x;b_j))\big| \\
&  = \sum_{l=1}^{m-1}\big|\frac{1}{t}\sum_{j=1}^t \big(\mu_{i_{l+1}}(\phi(x;b_j))-  \mu_{i_l}(\phi(x;b_j))\big)\big| \\
&  \leq
\sum_{l=1}^{m-1}\frac{1}{t}\sum_{j=1}^t |\mu_{i_{l+1}}(\phi(x;b_j))-  \mu_{i_l}(\phi(x;b_j))|
 \\
& =   \frac{1}{t}\sum_{j=1}^t\sum_{l=1}^{m-1} |\mu_{i_{l+1}}(\phi(x;b_j))-  \mu_{i_l}(\phi(x;b_j))|  \\
& = \frac{1}{t}\sum_{j=1}^t\sum_{l=1}^{m-1} \big|\frac{1}{k}\sum_{h=1}^k\phi(a_{i_{l+1},h};b_j)-  \frac{1}{k}\sum_{h=1}^k\phi(a_{i_{l},h};b_j)\big|  \\
& \leq \frac{1}{t}\sum_{j=1}^t\sum_{l=1}^{m-1} \frac{1}{k}\sum_{h=1}^k|\phi(a_{i_{l+1},h};b_j)-  \phi(a_{i_{l},h};b_j)|  \\
& = \frac{1}{t}\sum_{j=1}^t\frac{1}{k}\sum_{h=1}^k\sum_{l=1}^{m-1} |\phi(a_{i_{l+1},h};b_j)-  \phi(a_{i_{l},h};b_j)|.
\end{align*}
As $\phi(x;y)$ has finite $VC$-dimension, by Lemma~\ref{lemma} there is a natural number $K$ such that $\sum_{l=1}^{m-1} |\phi(a_{i_{l+1};h},b_j)-  \phi(a_{i_{l},h};b_j)|\leq K$ for all $h\leq k$. To summaries,
\begin{align*}
(m-1)\cdot|s-r| & \leq\sum_{l=1}^{m-1}\big|\frac{1}{t}\sum_{j=1}^t \mu_{i_{l+1}}(\phi(x;b_j))-\frac{1}{t}\sum_{j=1}^t \mu_{i_l}(\phi(x;b_j))\big| \\
&  \leq \frac{1}{t}\sum_{j=1}^t\frac{1}{k}\sum_{h=1}^k K=K
\end{align*}
By the assumption and Fact~\ref{Ramsey},  $m$ can be made arbitrary large, but $K$ remains constant. This is a contradiction.
\end{proof}
The cautious  reader should  be noticed the above argument provided a proof for formulas $\phi(x;y)$ with $|y|=1$, but it  can be easily checked  that this is  a minor  issue and that the argument above, can be a extended for variable $y$ of arbitrary finite length.

\medskip
From now on, we aim to show that the previous theorem can be translated into the randomized setting, noting that the independence of the \( f_i \)'s and \( g_I \)'s in Proposition \ref{translate} imposes a limitation. 
Therefore, in order to resolve this issue, we introduce a new concept (namely, Rademacher complexity), and by combining well-known facts about this complexity with Theorem~3.5, we arrive at the desired result.

The following definition is standard and finds applications in learning theory.

\begin{Definition}
	Let $A \subseteq \R^n$. Let $\sigma$ be a randomly chosen vector in $\R^n$, 	which is chosen uniformly from $\{+1,-1\}$. We call $\mathbb{E}_\sigma[\sup_{a \in A} \sigma \cdot a]$ the \emph{Rademacher mean width}, denoted $w_R(A)$.
\end{Definition}

This definition will be used for a class of functions.

\begin{Definition} \label{Rad-def}
	Let $Q \subseteq [0,1]^X$ be a function class, and let $\bar x = (x_1,\dots, x_n) \in X^n$. Let $Q(\bar x) = \{(q(x_1),\dots,q(x_n)): q \in Q\}$. 
	Define the Rademacher mean width $r_Q(n)$ to be $\sup_{\bar x \in X}w_R(Q(\bar x))$.
\end{Definition}


The following fact is a well-known classical result in learning theory, which shows the relationship between $r_Q$ and the VC-dimension (or, equivalently, the fat-shattering dimension).
\begin{Fact}[Lemma 2.11, \cite{Anderson}]
	Let $X$ be a set, and $Q \subseteq [0,1]^X$. The following are equivalent:
	\begin{enumerate}
		\item[(i)] $Q$ is a VC class.
		\item[(ii)] $\lim_{n \to \infty} \frac{r_Q(n)}{n} = 0$
	\end{enumerate}
\end{Fact}

In the following, we connect the notion of the  Rademacher mean width with our main question.

Fix a natural number \(k\geqslant 2\). For \(a \in M\), define $\phi_a:M_0^\Omega\to [0,1]$ via $\phi_a(f):=\mu\{\omega\in\Omega:M\models\phi(f(\omega),a)\}$.
For consistency with the notation in Definition~\ref{Rad-def}, set 
$X=M_0^\Omega$ 
and $Q=\{\phi_a: a\in M\}$. Let \(B_1, \dots, B_k\) be a partition of  events in \(\Omega\) such that \(\mu(B_i) = 1/k\) for each \(i\).
 For each $i\leq k$, set 
$Q_i=\{\phi_{a,B_i}: a\in M\}$ where 
$\phi_{a,B_i}:M_0^\Omega\to [0,1]$ is defined via $f\mapsto\mu\{\omega\in B_i:M\models\phi(f(\omega),a)\}$. Set $Q'=\sum_1^k Q_i=\{\sum_1^k q_i: q_i\in Q_i\}$. Then,

\begin{Lemma} \label{R-bound} Under the above assumptions and notation,
 \(r_{Q'}(n) \leq r_Q(n)\) for all \(n\).
\end{Lemma}
\begin{proof}
For simplicity, and without loss of generality, we may assume \(k=2\).  
(Anywhere in the proof where \(2\) appears, it can be replaced by \(k\) to obtain the general case.) Assume that \(\mu(B) = \mu(B^c) = 1/2\). Note that the following relation holds for every \(a\) and \(f\): $\phi_a(f)=\phi_{a,B}(f)+\phi_{a,B^c}(f)$.
We also observe that:
 $\sup_{c,d}(\phi_{c,B}(f)+\phi_{d,B^c}(f))=\sup_c\phi_{c,B}(f)+\sup_d\phi_{d,B^c}(f)$.
Therefore, by the linearity of \(\mathbb{E}_\sigma\), we have: 
\(\mathbb{E}_\sigma[\sup_c\sum_1^n\sigma_i\phi_{c,B}(f_i)+
\sup_d\sum_1^n\sigma_i\phi_{d,B}(f_i)]=
\mathbb{E}_\sigma
[\sup_c\sum_1^n\sigma_i\phi_{c,B}(f_i)]
+\mathbb{E}_\sigma[\sup_c\sum_1^n\sigma_i\phi_{c,B}(f_i)]\)
 for any $\bar f=(f_1,\ldots,f_n)\in M_0^\Omega$.
Therefore, $r_{Q'}(n)\leqslant r_{Q_1}(n)+r_{Q_2}(n)$ for all $n$.

We will show that $r_{Q_1}\leqslant \frac{1}{2}r_Q$ and similarly $r_{Q_2}\leqslant \frac{1}{2}r_Q$. To summarise, $r_{Q'}\leqslant r_Q$.

For this, define $g_{a,f_i}(\omega):=1_{\{\phi(f_i(\omega),a) \text{ is true}\}}$.
Define the measure $\mu'$ via $\mu'(A):=2\mu(A\cap B)$ for any event $A$. Then, $\phi_{a,B}(f_i)=\frac{1}{2}\int_\Omega g_{a,f_i}d\mu'$, and by linearity 
$\mathbb{E}_\sigma
[\sup_a\sum_1^n\sigma_i(\frac{1}{2}\int g_{a,f_i}d\mu')]
=\frac{1}{2}\mathbb{E}_\sigma
[\sup_a\sum_1^n\sigma_i(\int g_{a,f_i}d\mu')]$
 for any $\bar f$.
Note that for every \(f_1, \dots, f_n \in M_0^\Omega\), there exist \(f_1', \dots, f_n' \in M_0^\Omega\) such that the following equality holds:
$\int g_{a,f_i}d\mu=\int g_{a,f_i'}d\mu'$.  
(Since the functions \(f_i\) are simple, constructing the corresponding \(f_i'\) from the \(f_i\) is straightforward, and conversely, one can recover the \(f_i\) from the \(f_i'\).) Therefore, $$r_{Q_1}(n)=
\frac{1}{2}\sup_{\bar f'}\mathbb{E}_\sigma
[\sup_a\sum_1^n\sigma_i(\int g_{a,f_i}d\mu')]=
$$
$$\frac{1}{2}\sup_{\bar f}\mathbb{E}_\sigma
[\sup_a\sum_1^n\sigma_i(\int g_{a,f_i}d\mu)]
=\frac{1}{2}r_Q(n).
$$
A similar argument holds for \(Q_2\). Therefore, this suffices for the proof.
\end{proof}
 Note that in the above lemma, the given bound is independent of $k$ and hence holds uniformly for every $k \ge 2$. Therefore, by Theorem \ref{main}, assuming $T$ is $NIP$, since $Q$ is a VC class, it follows that $Q'$ is also a VC class.
 
\medskip
 We now only need the following simple lemma:
 
 \begin{Lemma} \label{R-bound-2}
Let \( Q_1 \subseteq Q_2 \subseteq \dots \) be an increasing sequence of function classes from \( X \) to \([0,1]\), and let \( \hat{Q} = \bigcup_{n=1}^\infty Q_n \). Then for any sample size \( k \geq 1 \),
\[
r_{\hat{Q}}(k) = \sup_{n \geq 1} r_{Q_n}(k).
\]
\end{Lemma}

\begin{proof}
For any sample \( \bar x = (x_1,\dots,x_k) \),
\[
w_R(\hat{Q}(\bar x)) 
= \mathbb{E}_{\sigma} \left[ \sup_{f \in \hat{Q}} \sum_{i=1}^k \sigma_i f(x_i) \right]
= \mathbb{E}_{\sigma} \left[ \sup_{n \geq 1} \sup_{f \in Q_n} \sum_{i=1}^k \sigma_i f(x_i) \right].
\]
Since the sequence \( Q_n \) is increasing, for each \( f \in \hat{Q} \) there exists \( n_0 \) such that \( f \in Q_n \) for all \( n \geq n_0 \). By a standard dominated--convergence (or monotone--convergence) argument we have:
\[ \mathbb{E}_{\sigma} \left[
\sup_{f \in \hat{Q}}  \sum_{i=1}^k \sigma_i f(x_i)\right]
= \sup_{n \geq 1} \mathbb{E}_{\sigma}\left[ \sup_{f \in Q_n} \sum_{i=1}^k \sigma_i f(x_i)\right].
\]
Therefore,
\[
w_R(\hat{Q}(\bar x)) = \sup_{n \geq 1}w_R(Q_n(\bar x)).
\]
Taking the supremum over all samples \( \bar x \in X^k \) gives
\[
r_{\hat{Q}}(k) = \sup_{\bar x \in X^k} \sup_{n \geq 1} w_R(Q_n(\bar x))
= \sup_{n \geq 1} \sup_{\bar x \in X^k} w_R(Q_n(\bar x))
= \sup_{n \geq 1} r_{Q_n}(k).
\]
\end{proof}


The above lemmas and our Theorem~\ref{main} lead us to the desired final proposition:
\begin{Proposition}
\label{independence}
Suppose $T$ is an  $NIP$ theory, $M$ a model of $T$, and $M_0^\Omega$ a simple randomizaion of $M$. For each $r<s$, there is a natural number $N$ such that there is no $f_1,\ldots,f_N\in M_0^\Omega$ such that for any $I\subseteq\{1,\ldots,N\}$ there is some $g_I\in M_0^\Omega$ with the following property:
$$\Bbb P\llbracket\phi(f_i;g_I)\rrbracket\leq r, \ \ \ \text{ if } \ i\in I, \text{ and }
$$
$$\Bbb P\llbracket\phi(f_i;g_I)\rrbracket\geq s, \ \ \ \text{ if } \ i\notin I. \ \ \ \ (\dagger) 
$$
\end{Proposition}
\begin{proof}
To reach a contradiction, suppose that this is not the case. That is, for every $N$ arbitrarily large, 
there are $f_1,\ldots,f_N\in M_0^\Omega$ such that for any $I\subseteq\{1,\ldots,N\}$ there is some $g_I\in M_0^\Omega$ with the  property $(\dagger)$.
 Without loss of generality (and if necessary, by replacing $r,s$ with suitable $r',s'$ in the interval $(r,s)$), we may assume that there exists a sufficiently large $k$ such that each $g_I$ is approximately equal, up to a negligible error, to a random simple element $g_I'$ of the form $\sum_1^k b_i^{B_i}$ where $B_1,\ldots,B_k$ is a partition of events with $\mu(B_i)=\frac{1}{k}$ for all $i\leqslant k$. (That is, $\int|g_I-g_I'|<\delta$ for a suitably small $\delta$.)
Therefore, we may assume that $(\dagger)$'s property holds if we replace the $g_I$'s with the $g_I'$'s.
However, note that 
with a straightforward computation, we have 
$$\Bbb P\llbracket\phi(f_i;g_I')\rrbracket=
\Bbb P\Big(\bigcup_{i=1}^{k}\big(\llbracket\phi(f_i;b_j^{\Omega})\rrbracket\cap  { B}_j\big)\Big).$$
Note that the above expression corresponds to the formula $$
\Bbb P\Big(\bigvee_{i=1}^{k}\big(\llbracket\phi(f_i;b_j^\Omega)\rrbracket\wedge  { B}_j\big)\Big)=
\sum_{i=1}^{k}\Bbb P\big(\big(\llbracket\phi(f_i;b_j^\Omega)\rrbracket\cap  { B}_j\big)\big).$$
(Recall that each ${\cal B}_j$ is equivalent to $\llbracket h_j=k_j\rrbracket$, where $h_j$ and $k_j$  are in the simple model of randomization.)

On the other hand, according to the above notation in Lemma~\ref{R-bound}, we have:
$$\Bbb P\llbracket\phi(f_i;g_I')\rrbracket=
\sum_{i=1}^{k}\Bbb P\big(\big(\llbracket\phi(f_i;b_j^\Omega)\rrbracket\cap  { B}_j\big)\big)=\sum_{i=1}^{k} \phi_{b_j,B_j}(f_i),$$
which $\sum_{i=1}^{k} \phi_{b_j,B_j}
\in Q_k$
and the latter family
is precisely the $Q_k$ defined above for a fixed $k$. We can define them so that $Q_k$ is a subset of $Q_{k+1}$, and then define $\hat{Q}:=\bigcup_k Q_k$.
(Recall that $Q=Q_1=\{\phi_{b}=\phi_{b,\Omega}:b\in M\}$ has finite VC dimension by  Theorem~\ref{main}.)

By Lemmas \ref{R-bound} and \ref{R-bound-2}, we have $r_{\hat{Q}}(n)=\sup_k r_{Q_k}(n) \leqslant \sup_k r_Q(n)= r_Q(n)$ for all $n$.

 Taking all the above points into account, and assuming that $VC(Q)$ is finite by Theorem~\ref{main}, we conclude  that
 $VC(\hat{Q})$ is finite, and so  $N$ cannot be arbitrarily large. 
This is enough. 
\end{proof}

\medskip
Now we are ready to achieve the goal we aimed for.
\begin{Corollary}\label{Corollary}
	If $T$ has $NIP$ then so has $T^R$.
\end{Corollary}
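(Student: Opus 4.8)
The plan is to derive the Corollary directly from Theorem~\ref{main}, using quantifier elimination in $T^R$ to cut the problem down to formulas of the form $\mathbb{P}\llbracket\phi(x;y)\rrbracket$, and using Proposition~\ref{translate} to convert a hypothetical $IP$-pattern living in a simple (pre-)model of $T^R$ into a shattered sequence of average measures over a model of $T$.

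First I would record the standard reduction. By Theorem~2.9 of \cite{BK}, every $L^R$-definable predicate is a uniform limit of Boolean combinations of formulas $\mathbb{P}\llbracket\phi(x_1,\ldots,x_n)\rrbracket$ with $\phi\in L$, and in continuous logic $NIP$ is preserved under Boolean combinations and uniform limits of formulas; hence it suffices to show that each $\mathbb{P}\llbracket\phi(x;y)\rrbracket$ is $NIP$ in $T^R$, exactly as already noted after Remark~\ref{remark-com}. Next I would argue by contradiction: assume $\mathbb{P}\llbracket\phi(x;y)\rrbracket$ has $IP$, so by Remark~\ref{remark-com} there are reals $r<s\le 1$ such that in every pre-model of $T^R$ and for every $n$ one can find $f_1,\ldots,f_n$ and, for each $I\subseteq\{1,\ldots,n\}$, some $g_I$ with $\mathbb{P}\llbracket\phi(f_i;g_I)\rrbracket\le r$ for $i\in I$ and $\ge s$ otherwise. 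Taking the pre-model to be a simple randomization $M_0^{\Omega}$ of a (sufficiently saturated) $M\models T$, Proposition~\ref{translate} (together with the remark following it, which lets us pass between $M_0^{\Omega}$ and the associated simple model) yields, for each $n$, a sequence $(\mu_i:i<n)$ of average measures of elements of $M$ that is shattered by elements of $M_0^{\Omega}$ for $\phi$ and $r<s$.

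Finally I would invoke Theorem~\ref{main}: since $T$ is $NIP$, there is a bound $N=N(\phi,r,s)$ so that no sequence of length $N$ of average measures of elements of $M$ is shattered by elements of $M_0^{\Omega}$ for $\phi$ and $r<s$. Choosing $n=N$ in the previous paragraph gives a contradiction, so $\mathbb{P}\llbracket\phi(x;y)\rrbracket$ is $NIP$ and therefore $T^R$ is $NIP$. The case $|y|>1$ is handled by the same remark appended to the proof of Theorem~\ref{main}.

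Given Theorem~\ref{main}, the Corollary is essentially immediate, so there is no serious obstacle; the only points that genuinely need care are bookkeeping. One is the reduction via quantifier elimination, i.e.\ the (routine) fact that $NIP$ in continuous logic is stable under Boolean combinations and uniform limits. The other is making sure that the bound $N$ furnished by Theorem~\ref{main} does not secretly depend on the particular $M$, since it must be confronted with the $n$ coming from the compactness argument of Remark~\ref{remark-com}: inspecting that proof, $N$ is determined only by the finite $VC$-dimension of $\phi$ (which, as $T$ is complete, is the same in every model), by the approximation constant $k$ of Fact~\ref{fact_genstable} (depending only on $\phi$ and $\epsilon=\tfrac18|s-r|$), and by a Ramsey number, so indeed $N$ depends only on $\phi$ and $r<s$; alternatively, one simply fixes a monster model $M\models T$ once and for all and works inside its simple randomizations throughout.
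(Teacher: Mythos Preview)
Your proposal is correct and follows essentially the same route as the paper: reduce via quantifier elimination and closure of $NIP$ under Boolean combinations and uniform limits to formulas $\mathbb{P}\llbracket\phi(x;y)\rrbracket$, then combine Proposition~\ref{translate} with Theorem~\ref{main} to contradict a putative $IP$-pattern in a simple pre-model. The only cosmetic difference is that the paper handles the model-independence of the bound $N$ by noting that ``$\mathbb{P}\llbracket\phi(x;y)\rrbracket$ has $VC$-dimension $\le N_{\phi,r,s}$'' is approximately expressible in $L^R$ and hence transfers between (pre-)models, whereas you argue directly that $N$ depends only on $\phi$ and $r,s$ by inspecting the inputs to the proof of Theorem~\ref{main}; both are fine.
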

\begin{proof}
	This follows from Theorem~\ref{main} and elimination of quantifiers  in $T^R$ (\cite[Theorem~2.9]{BK}) as follows:
First, by Theorem~\ref{main},  Proposition~\ref{translate} and 
Proposition \ref{independence}
(and the subsequent explanations), for any formula $\phi(x;y)\in L$, the  atomic formula $\Bbb P\llbracket\phi(x;y)\rrbracket$  has finite $VC$-dimension by elements in (pre-)model $M_0^{\Omega}$.
Notice that the statement ``for $r<s\leq 1$, $\Bbb P\llbracket\phi(x;y)\rrbracket$ has finite $VC$-dimension $\leq N_{\phi,r,s}$" is {\em approximately} expressible\footnote{There is a bit of technicality here in continuous logic: we actually need to present a sequence of statements that together approximate (express) the desired expression.} in $L^R$, and so holds in some/any (pre-)model of $T^R$. See also Remark~\ref{remark-com}. Therefore, ${\Bbb P}\llbracket\phi(x;y)\rrbracket$ had $NIP$.

On the other hand,
  it is easy to see  that if formulas $\varphi, \psi\in L^R$ have $NIP$, then $\neg\varphi$, $\frac{1}{2}\varphi$ and $\varphi\dotdiv\psi$ have $NIP$ as well. Therefore, every guantifier free formula has $NIP$. The final point is that the uniform limit  of a sequence of $NIP$ formulas has $NIP$.\footnote{In fact, continuous model theorists expand  formulas to uniform limits of formulas in the language, and when they refer to a formula, they mean a definable predicate that is a uniform limit of formulas.} Therefore, by the elimination of quantifiers, $T^R$  has $NIP$.
\end{proof}

\begin{Remark}\label{Remark quantitative}
\em{ 
With the restriction that one of the variables \(x\) or \(y\) is interpreted only by elements in classical models such as \(M\), 
the argument presented in Theorem~\ref{main}
gives  a quantitative bound for  $VC$-dimension of ${\Bbb P}\llbracket\phi(x;y)\rrbracket$ in terms of $VC$-dimension of $\phi(x;y)$, the value $\epsilon=s-r$ and the Ramsey number (of $m,n,k\in\Bbb N$ where $m,n,k$ are given in Fact~\ref{Ramsey}).
Furthermore, our proof can be formulated in a purely combinatorial language without any  reference to specific model-theoretic language.}
\end{Remark}

\noindent
\subsection{Some Discussions about Preservation of  Stability}
The reader might reasonably expect that the method presented above could also be applied in proving the preservation of stability. 
In fact, the above proof can be adapted to yield a proof for the stable case as well, but using the
notions of Littlestone dimension from learning theory—which correspond to the Shelah $2$-rank
in model theory—instead of VC-dimension and Rademacher complexity used in the NIP case.
Since these notions require introduction and a careful study, and since the purpose of the present
paper is to focus on the NIP setting, we only present a sketch of the proof in the stable case, and
leave the full detailed argument for another paper. We note that, with the tools developed in this
paper, we can obtain a theorem analogous to Theorem~\ref{main} in the stable context (cf. Theorem~\ref{main-2-2}).


\bigskip
 Next, we recall the notion of stability in continuous logic.
\begin{Definition}\label{stable def}
\em{
	Let $T_c$ be a complete continuous theory  and $\varphi(x;y)$ a partitioned formula.
	
	\noindent
	(i) We say that $\varphi(x;y)$ has the  $OP$ (order property) if there are  $r<s\leq 1$ and  sequences $(a_i:i\leq\omega)$ and $(b_j:j<\omega)$ of parameters (in the monster model) such that $\varphi(a_i;b_j)\leq r$ if $i<j$ and $\varphi(a_i;b_j)\geq s$ otherwise. We say that $\varphi(x;y)$ is stable  if it has no $OP$.
	
	\noindent
	(ii) We say that $T_c$ has the $OP$ (or is unstable) if there is some formula $\varphi(x;y)$ with $OP$. $T_c$ is stable if it has no $OP$.
	
	\noindent

	\noindent
	Notice that by setting $r=0$ and $s=1$ in Definition~\ref{stable def}, we obtain exactly the definition of stable/unstable in classical logic.
}
\end{Definition}

\begin{Remark} \label{remark-com-2}
	\em{By compactness, $T_c$  has  $OP$ if there are a formula $\varphi(x;y)$ and $r<s\leq 1$ such that for any $n\in\Bbb N$ and each (pre-)model $M$, there are $a_1,\ldots,a_n, b_1,\ldots,b_n\in M$ such that $\varphi(a_i;b_j)\leq r$ if $i<j$, and $\varphi(a_i,b_j)\geq s$ otherwise.}
\end{Remark}

\begin{Lemma} \label{main-2}
	Let $T$ be a stable theory. Then for each formula $\phi(x;y)$ and $r<s$, there is a natural number $N$ such that there is {\bf no} sequence $(\mu_1,\ldots,\mu_N)$ of the average measures of elements in $M$ and elements $b_1,\ldots,b_N\in M$ such that $\mu_i(\phi(x;b_j))\leq r$ if $i<j$ and $\mu_i(\phi(x;b_j))\geq s$ otherwise.
\end{Lemma}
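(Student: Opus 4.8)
The plan is to reduce the order property for \emph{averages} of a stable formula to the genuine order property of a single first-order $L$-formula, using only the translation device of Remark~\ref{avdef} together with the uniform approximation guaranteed by Fact~\ref{fact_genstable}. In contrast with Theorem~\ref{main}, no appeal to Ramsey's theorem is needed here: in the statement the $b_j$ are plain elements of $M$ (not averages), so the relevant condition is already captured by an honest formula of $L$ once the sizes of the average measures have been bounded.

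I would argue by contradiction. Assume that for every large $N$ there are average measures $\mu_1,\dots,\mu_N$ of elements of $M$ and elements $b_1,\dots,b_N\in M$ with $\mu_i(\phi(x;b_j))\le r$ whenever $i<j$ and $\mu_i(\phi(x;b_j))\ge s$ otherwise. Fix $\epsilon$ with $0<\epsilon<\tfrac{1}{2}|s-r|$. Average measures are generically stable and $\phi$, being stable, is $NIP$, so Fact~\ref{fact_genstable} supplies an integer $k=k(\phi,\epsilon)$ --- independent of $N$ and of the particular measure --- and, for each $i$, a tuple $\bar a_i=(a_{i,1},\dots,a_{i,k})\in M^{k}$ with $\sup_{b\in M}|\mu_i(\phi(x;b))-{\rm Ave}(\bar a_i;\phi(x;b))|\le\epsilon$. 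Setting $r'=r+\epsilon$ and $s'=s-\epsilon$ (so $r'<s'$), we obtain ${\rm Ave}(\bar a_i;\phi(x;b_j))\le r'$ if $i<j$ and ${\rm Ave}(\bar a_i;\phi(x;b_j))\ge s'$ otherwise; uniformity over all $b\in M$ is exactly what permits specialization to each $b_j$.

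Next I would invoke Remark~\ref{avdef}: there is an $L$-formula $L_{\phi,k,r'}(x_1,\dots,x_k;y)$ with $\models L_{\phi,k,r'}(\bar a;b)$ iff ${\rm Ave}(\bar a;\phi(x;b))\le r'$. Since $r'<s'$, the inequalities above yield $\models L_{\phi,k,r'}(\bar a_i;b_j)$ exactly when $i<j$. Reading the $\bar a_i$ as parameters on the $x$-side (a fixed finite block of $k$ variables) and the $b_j$ on the $y$-side, the sequences $(\bar a_i)_{i\le N}$ and $(b_j)_{j\le N}$ then witness an order-property pattern of length $N$ for the single first-order $L$-formula $L_{\phi,k,r'}$; passing to $G_{\phi,k,s'}$ or to a negation, if one prefers, matches the $\{0,1\}$-valued shape of Definition~\ref{stable def} verbatim.

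Since $N$ is arbitrary, compactness (Remark~\ref{remark-com-2}) upgrades these unbounded finite patterns to a genuine instance of $OP$ for $L_{\phi,k,r'}$ in a model of $T$, so $T$ would be unstable, contradicting the hypothesis. The only point requiring care is the one isolated above: the approximation from Fact~\ref{fact_genstable} must be uniform in $b$ and must use a bound $k$ that does not grow with $N$ --- both of which Fact~\ref{fact_genstable} provides --- after which the estimate reduces to routine bookkeeping with the gap $|s-r|$. As after Theorem~\ref{main}, the restriction $|y|=1$ causes no difficulty.
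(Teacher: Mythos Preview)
Your argument is correct and is genuinely different from the paper's. The paper proceeds by first reducing to uniform averages over $k$-tuples (just as you do, via Fact~\ref{fact_genstable}), but then applies Ramsey (Fact~\ref{Ramsey}) to extract a \emph{strongly} $\phi$-$k$-$n$-indiscernible sub-array, and uses that indiscernibility to align the witnessing coordinates: from ${\rm Ave}(\bar a_i;\phi(x;b_j))<1$ for $i<j$ one picks some $l_i$ with $\models\neg\phi(a_{i,l_i};b_j)$, and strong indiscernibility lets one replace each $l_i$ by $1$, producing a genuine $OP$ pattern for $\phi$ itself on the single column $(a_{i,1})_i$. Your route bypasses Ramsey entirely by observing that the condition ${\rm Ave}(\bar a;\phi(x;b))\le r'$ is already first-order (Remark~\ref{avdef}), so the pattern on $(\bar a_i,b_j)$ is an $OP$ pattern for the $L$-formula $L_{\phi,k,r'}(\bar x;y)$; since $T$ is stable, \emph{every} formula is stable, and you are done.

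What each buys: your approach is shorter and needs no combinatorics beyond Fact~\ref{fact_genstable}. The paper's approach, by contrast, produces the order property for $\phi$ itself rather than for a derived formula, which fits the paper's emphasis on quantitative bounds tied to $\phi$ (cf.\ the Remark after Corollary~\ref{Corollary 2}) and mirrors the strategy later reused in Theorem~\ref{main-2-2}. One small point of bookkeeping you should make explicit: Remark~\ref{avdef} is stated for rational thresholds, so either choose $\epsilon$ so that $r'$ is rational, or note that ${\rm Ave}(\bar a;\phi(x;b))$ takes values in $\{0,\tfrac{1}{k},\ldots,1\}$, whence any threshold strictly between $r'$ and $s'$ that is not a multiple of $\tfrac{1}{k}$ defines the same formula.
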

\begin{proof}
Suppose for a contradiction that there are a formula $\phi(x;y)$ and $r<s$ such that the above statement does not hold. That is,  for any arbitrary large number $N$ there are a sequence $(\mu_i:i\leq N)$ of the average measures of elements in $M$ and elements $b_1,\ldots,b_N\in M$ such that
$\mu_i(\phi(x;b_j))\leq r$ if $i<j$ and $\mu_i(\phi(x;b_j))\geq s$ otherwise.

As every stable theory is $NIP$, by Fact~\ref{fact_genstable} and the second paragraph of the proof of Theorem~\ref{main}, we can assume that there is a fixed $k$ such that for every $i$  the measure $\mu_i(\phi(x;y))$ is of the form ${\rm Ave}(\bar a_i;\phi(x;y))$ where  $\bar a_i=(a_{i,1},\ldots,a_{i,k})\in M^k$.

Let $m$ be an arbitrary large number. As $N$ can be arbitrary large, by Fact~\ref{Ramsey}, we can find a strongly $\phi$-$k$-$n$-indiscernible sub-array  $(\bar a_{i_1},\ldots,\bar a_{i_m})$ (of length $m$) of $(\bar a_1,\ldots, \bar a_N)$.
To simplify the notation, we denote this subsequence by $(\bar a_1,\ldots,\bar a_m)$.

Fixed $j\leq m$. By the assumption, for each $i<j$, there is some $l_i\leq k$ such that $\models\neg\phi(a_{i,l_i},b_j)$, and for each $i\geq j$ there is some $l_i\leq k$ such that $\models\phi(a_{i,l_i},b_j)$.
(If not, $\mu_i(\phi(x;b_j))=1$ if $i<j$, or $\mu_i(\phi(x;b_j))=0$ otherwise.)
As $(\bar a_1,\ldots,\bar a_m)$ is strongly $\phi$-$k$-$n$-indiscernible, we can assume that for any $i$, $l_i=1$. That is, there is some $c_j\in M$ such that $\models\neg\phi(a_{i,1},c_j)$ if $i<j$, and $\models\phi(a_{i,1},c_j)$ otherwise.

Therefore, for each $m$, there are $a_1,\ldots,a_m,c_1,\ldots,c_m\in M$ such that  $\models\phi(a_i,c_j)$  if and only if $i\geq j$.  But,
as $\phi(x;y)$ is stable, $m$ cannot be arbitrary large, a contradiction.
\end{proof}

Let $\mu(x)={\rm Ave}(a_1,\ldots,a_k)$ and $\nu(y)={\rm Ave}(b_1,\ldots,b_k)$ be average measures. In the following, we write $$\mu\otimes\nu(\phi(x;y)):=\frac{1}{k}\sum_{j\leq k}\mu(\phi(x;b_j))=\frac{1}{k}\sum_{i\leq k}\nu(\phi(a_i;y))=\frac{1}{k}\sum_{i\leq k}\frac{1}{k}\sum_{j\leq k}\phi(a_i;b_j).$$
\begin{Theorem} \label{main-2-2}
Under the assumption of Lemma \ref{main-2}, there is a natural number $N$ such that there is {\bf no} sequences $(\mu_1,\ldots,\mu_N)$ and $(\nu_1,\ldots,\nu_N)$ of the average measures of elements in $M$ such that $\mu_i\otimes\nu_j(\phi(x;y))\leq r$ if $i<j$ and $\mu_i\otimes\nu_j(\phi(x;y))\geq s$ otherwise.
\end{Theorem}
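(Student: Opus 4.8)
The plan is to argue by contradiction and to \emph{reduce} a hypothetical failure of the statement to a failure of Lemma~\ref{main-2}, which is already impossible. So suppose that for some $\phi(x;y)$ and $r<s$ and every $N$ there are average measures $(\mu_i)_{i\le N}$, $(\nu_j)_{j\le N}$ of elements of $M$ with $\mu_i\otimes\nu_j(\phi)\le r$ whenever $i<j$ and $\ge s$ otherwise. First I would normalize exactly as in the second paragraph of the proof of Theorem~\ref{main} (and Lemma~\ref{main-2}): shrinking the gap to rationals and replacing $r,s$ by $r+2\epsilon,\,s-2\epsilon$ with $\epsilon=\tfrac{1}{8}|s-r|$, Fact~\ref{fact_genstable} lets me assume that each $\mu_i={\rm Ave}(a_{i,1},\dots,a_{i,k})$ and each $\nu_j={\rm Ave}(b_{j,1},\dots,b_{j,k})$ is an average of a \emph{fixed} number $k$ of points of $M$ --- the same $k$ on both sides, obtained by applying Fact~\ref{fact_genstable} both to $\phi$ and to the variable-swap $\psi(u;v):=\phi(v;u)$ and then padding the averages with repeated points.

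Now fix a large $m$. Because $\mu_i\otimes\nu_j(\phi)=\tfrac{1}{k}\sum_{h\le k}\nu_j(\phi(a_{i,h};y))$ lies in $[0,r]$ (for $i<j$) or in $[s,1]$ (for $i\ge j$), for each pair $(i,j)$ there is a coordinate $\ell_{ij}\le k$ with $\nu_j(\phi(a_{i,\ell_{ij}};y))\le r$ (for $i<j$) or $\ge s$ (for $i\ge j$). The $A$-$\phi$-$k$-$m$-conditions in the variables $x_1,\dots,x_m$ form a finite set, so by the Ramsey argument underlying Fact~\ref{Ramsey strongest} --- applied to the $(N,k)$-array $(a_{i,h})$ for $N$ large enough in terms of $m$ --- I may pass to a length-$m$ sub-array, relabelled $(\bar a_1,\dots,\bar a_m)$ (with the $\nu_j$ relabelled accordingly so the triangular pattern is preserved), which is strongly $A$-$\phi$-$k$-$m$-indiscernible. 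The crucial point is then that for each fixed $j$ the $A$-$\phi$-$k$-$m$-condition
\[
\Phi_j(x_1,\dots,x_m):=\exists\bar y\Big(\bigwedge_{i<j}{\rm Ave}(\bar y)(\phi(x_i;y))\le r\ \wedge\ \bigwedge_{i\ge j}{\rm Ave}(\bar y)(\phi(x_i;y))\ge s\Big)
\]
holds at the tuple $(a_{1,\ell_{1j}},\dots,a_{m,\ell_{mj}})$, as witnessed by $\bar b_j$; applying strong $A$-$\phi$-$k$-$m$-indiscernibility with the map $f(i)=\ell_{ij}$ transfers $\Phi_j$ to $(a_{1,1},\dots,a_{m,1})$. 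Hence there is $\bar c_j=(c_{j,1},\dots,c_{j,k})\in M^k$ with ${\rm Ave}(\bar c_j)(\phi(a_{i,1};y))\le r$ for $i<j$ and $\ge s$ for $i\ge j$.

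Finally I would set $\rho_j:={\rm Ave}(\bar c_j)$ and $d_i:=a_{i,1}$; with $\psi(u;v)=\phi(v;u)$ this reads $\rho_j(\psi(u;d_i))\le r$ for $i<j$ and $\ge s$ for $i\ge j$, and reversing both indices ($\lambda_p:=\rho_{m+1-p}$, $e_q:=d_{m+1-q}$) turns it into $\lambda_p(\psi(u;e_q))\le r$ if $p<q$ and $\ge s$ otherwise, with $(\lambda_p)_{p\le m}$ average measures of elements of $M$ and $e_q\in M$. Since $m$ can be taken arbitrarily large while the bound in Lemma~\ref{main-2} (applied to $\psi$ and $r<s$) is fixed, this is a contradiction. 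The only genuinely new work beyond Lemma~\ref{main-2} is the Ramsey/indiscernibility bookkeeping in the middle paragraph, and the point that needs care there is twofold: $k$ must be fixed independently of $m$ (so the number of $A$-$\phi$-$k$-$m$-conditions, hence of Ramsey colours, stays bounded), and the existential quantifier $\exists\bar y$ must be carried \emph{inside} $\Phi_j$ all the way through, since that is exactly what allows the per-pair choice of witnessing coordinate $\ell_{ij}$ to be absorbed by the array-indiscernibility instead of having to be made uniform by hand.
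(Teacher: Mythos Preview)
Your proposal is correct and follows essentially the same route as the paper: contradiction, normalization to a fixed $k$ via Fact~\ref{fact_genstable}, extraction of a strongly $A$-$\phi$-$k$-$m$-indiscernible sub-array (Fact~\ref{Ramsey strongest}) to transfer the witnessing $A$-condition to the first column, and reduction to Lemma~\ref{main-2} for $\phi^{\mathrm{opp}}$. If anything your write-up is slightly more careful than the paper's, in that you make the padding-to-a-common-$k$ step and the final index reversal needed to match the orientation of Lemma~\ref{main-2} explicit.
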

\begin{proof}
	Suppose for a contradiction that there are a formula $\phi(x;y)$ and $r<s$ such that the above statement does not hold.  That is, for any arbitrary large number $N$ there are  sequences $(\mu_i:i\leq N)$ and $(\nu_i:i\leq N)$ of the average measures of elements in $M$ such that
	$\mu_i\otimes\nu_j(\phi(x;y))\leq r$ if $i<j$ and $\mu_i\otimes\nu_j(\phi(x;y))\geq s$ otherwise.
	
	Again, by Fact~\ref{fact_genstable} and the explanation in the second paragraph of the proof of Theorem~\ref{main}, we can assume that there is a fixed $k$ such that for every $i$  the measures $\mu_i(\phi(x;y))$ and $\nu_i(\phi(x;y))$ are of the form ${\rm Ave}(\bar a_i;\phi(x;y))$ and ${\rm Ave}(\bar b_i;\phi(x;y))$ where  $\bar a_i=(a_{i,1},\ldots,a_{i,k})$ and $\bar b_i=(b_{i,1},\ldots,b_{i,k})$ are in $M^k$, respectively. 
	
	Fixed $j\leq N$. By the assumption, for each $i<j$, there is some $l_i\leq k$ such that $\nu_j(\phi(a_{i,l_i},y))\leq r$, and for each $i\geq j$ there is some $l_i\leq k$ such that $\nu_j(\phi(a_{i,l_i},y))\geq s$.
(If not, $\mu_i\otimes\nu_j(\phi(x;y))>r$ if $i< j$, or $\mu_i\otimes\nu_j(\phi(x;y))<s$ otherwise.)
 By Fact~\ref{Ramsey strongest}, we can assume that for any $i$, $l_i=1$. Indeed, we could have worked with a $A-\phi$-$k$-$N$-indiscernible (sub-)array  $(\bar a_i:i\leq N)$ for $r<s$. 

  Thus, for every $j\leq N$, there  is  some $\nu_j'={\rm Ave}(c_{j,1},\ldots,c_{j,k})$ with $c_{j,1},\ldots,c_{j,k}\in M$ such that $\nu_j'(\phi(a_{i,1},y))\leq r$ if $i<j$, and $\nu_j'(\phi(a_{i,1},y))\geq s$ otherwise. But,
	 by Lemma~\ref{main-2},
as $\phi^{opp}(y;x):=\phi(x;y)$ is stable, $N$ can not be arbitrary large for the formula $\phi^{opp}(y;x)$, the sequence
 $(\nu_1',\ldots,\nu_N')$ and parameters $b_i:=a_{i,1}$ ($i\leq N$) in Lemma~\ref{main-2}, a contradiction.
\end{proof}

\begin{Remark}
\label{Sketch-2}
It is proved in  
Theorem~5.14
 of \cite{BK} that:
If $T$ is stable then so is $T^R$.
Using ideas from the present paper, 
this follows from Theorem~\ref{main-2-2} (instead of Theorem~\ref{main}), a stable version of Proposition ~\ref{independence}, Proposition \ref{transfer} and elimination of quantifiers, similar to the proof of Corollary~\ref{Corollary}. 
Of course, to prove a proposition analogous to Proposition~\ref{independence} in the stable case, we need the
notion of Littlestone dimension (which corresponds to Shelah $2$-rank) instead of VC-dimension
and Rademacher complexity used in the NIP case, which we do not address in this paper and will
be studied elsewhere. 
\end{Remark}

\begin{Remark} 
\em{ Similar to the $NIP$ case (cf. Remark~\ref{Remark quantitative}),
 with the condition that at least one of the variables \(x\) or \(y\) is interpreted only by elements of the classical models,
 Theorem~\ref{main-2-2} provides a quantitative bound for not having order property for formula ${\Bbb P}\llbracket\phi(x;y)\rrbracket$.}
\end{Remark}

\bigskip\noindent
\subsection{$NSOP$ and Shelah's Theorem for $T^R$}  
The concept of strict order property ($SOP$) has various generalizations in continuous logic, e.g.  \cite[Definition~1.1(i)]{K-sop}.  This notion was further studied in   \cite{K-classification}. The key feature of this property is highlighted within the classical model theory, by a theorem of Shelah which states a first-oder complete theory is stable if and only if it is $NIP$ and $NSOP$. In this part of the paper we show that, in the context of randomization, Definition \cite[Definition~1.1(i)]{K-sop} is the weakest condition amongst its counterpart definitions which satisfies the   Shelah's theorem. That is, $T^R$ is stable if and only if it is $NIP$ and $NSOP$.
We first review the notion of $SOP$ of  \cite{K-classification} for continuous logic.

In the following, $\cal U$ and $\cal C$ denote the monster models of $T$ snd $T^R$, respectively. Recall that in continuous logic $1$ is ``False" and $0$ is ``True", but this is a minor issue, and for consistency in notation,  we continue to use $\varphi=1$ if $\models\varphi$ and $\varphi=0$ if $\models\neg\varphi$.

\begin{Definition}[\cite{K-classification}, Definition~3.3] \label{SOP-definition}
	{\em  Let $T_c$ be a complete continuous theory, $\varphi(x;y)$ a partitioned formula and $\epsilon>0$.
		
		\noindent (i)
		We say that the formula $\varphi(x;y)$  has the {\em
			$\epsilon$-strict order property ($\epsilon$-$SOP$)} if there exists a sequence
		$(a_ib_i:i<\omega)$ in   the monster model $\cal C$ of $T_c$ such that for any $b\in\cal C$ and for all $i<j$,
		$$\varphi(b;a_i)\leqslant\varphi(b;a_{i+1}), \ \mbox{ and } \ \varphi(b_j;a_i)+\epsilon\leqslant\varphi(b_{i+1};a_j).$$
		
		\medskip\noindent
		(ii) We say that the formula $\varphi(x;y)$  has the
		{\em strict order property} ($SOP$) if it has $\epsilon$-$SOP$ for
		some $\epsilon>0$.
		
		\medskip\noindent
		(iii) We say that $T_c$ has $SOP$ if there is some formula $\varphi(x;y)$
		with $SOP$. We say that $T_c$ has the $NSOP$ if it does not have the $SOP$.}
\end{Definition}


\begin{Proposition}\label{sop->sop}
If $T$ has $SOP$  then so has $T^R$.
\end{Proposition}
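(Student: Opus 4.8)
The plan is to transfer the $SOP$ witness of $T$ directly into a simple model of $T^R$ by means of deterministic random elements. Suppose $\varphi(x;y)\in L$ has the $\epsilon$-$SOP$ (in the classical sense, i.e.\ with $r=0$, $s=1$): there is a sequence $(a_ib_i:i<\omega)$ in $\mathcal U$ such that for all $i<j$ and all $b\in\mathcal U$, $\varphi(b;a_i)\leq\varphi(b;a_{i+1})$ and $\varphi(b_j;a_i)+1\leq\varphi(b_{i+1};a_j)$; since these are $\{0,1\}$-valued, the second clause just says $\varphi(b_j;a_i)=0$ and $\varphi(b_{i+1};a_j)=1$. Pass to a (large, saturated enough) model $M\models T$ containing such a sequence, and form a simple randomization $(M_0^\Omega,\mathcal B_0)$ of $M$. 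For each $i<\omega$ let $\tilde a_i, \tilde b_i\in M_0^\Omega$ be the \emph{constant} random elements $t\mapsto a_i$ and $t\mapsto b_i$. I will show that $\mathbb P\llbracket\varphi(x;y)\rrbracket$ has $\epsilon'$-$SOP$ in $T^R$ for a suitable $\epsilon'>0$, witnessed by $(\tilde a_i\tilde b_i:i<\omega)$.

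The key computation is that for constant random elements, $\mathbb P\llbracket\varphi(\tilde b_i;\tilde a_j)\rrbracket = \mu_0(\{t:M\models\varphi(b_i;a_j)\}) = \varphi(b_i;a_j)\in\{0,1\}$, so the randomized formula restricted to these parameters takes exactly the classical values. Hence for $i<j$ we get $\mathbb P\llbracket\varphi(\tilde b_j;\tilde a_i)\rrbracket = 0$ and $\mathbb P\llbracket\varphi(\tilde b_{i+1};\tilde a_j)\rrbracket = 1$, which gives the ``strict'' clause with any $\epsilon'\leq 1$. The monotonicity clause is the delicate one: I need $\mathbb P\llbracket\varphi(g;\tilde a_i)\rrbracket \leq \mathbb P\llbracket\varphi(g;\tilde a_{i+1})\rrbracket$ for \emph{every} $g\in M_0^\Omega$ (and then by density for every element of the completed simple model, and by elementarity for every element of $\mathcal C$). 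Writing $g=\sum_{l\leq k} c_l^{\,\mathcal C_l}$ as in the proof of Proposition~\ref{transfer}, one has $\mathbb P\llbracket\varphi(g;\tilde a_i)\rrbracket = \sum_{l\leq k}\mu_0(\mathcal C_l)\cdot\varphi(c_l;a_i)$, and the pointwise classical inequality $\varphi(c_l;a_i)\leq\varphi(c_l;a_{i+1})$ (valid since each $c_l\in M$) gives the desired inequality term by term. So the monotonicity clause transfers for free from the pointwise monotonicity in $T$.

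The main obstacle, and the one step deserving care, is the quantifier ``for any $b\in\mathcal C$'' in Definition~\ref{SOP-definition}: the inequalities in the $\epsilon$-$SOP$ must hold against \emph{all} elements of the monster model of $T^R$, not merely against elements of one particular simple model $M_0^\Omega$. I would handle this exactly as in the proof of Corollary~\ref{Corollary}: first establish that in the simple pre-model $M_0^\Omega$ no $g$ (resp.\ no $g$ among the constant ones, which suffices for the strict clause) can violate the required inequalities, using Proposition~\ref{transfer} to reduce an arbitrary $g$ to a convex combination of constants; then observe that the statement ``$(\tilde a_i\tilde b_i:i<n)$ witnesses the $n$-step approximation to $\epsilon$-$SOP$ for $\mathbb P\llbracket\varphi\rrbracket$'' is (approximately) expressible in $L^R$ for each $n$, hence by completeness of $T^R$ holds in the monster model $\mathcal C$; finally take $n\to\omega$ and invoke compactness/saturation of $\mathcal C$ to extract an infinite witnessing sequence. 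In particular one must double-check that the universally-quantified monotonicity clause, which in $\mathcal C$ ranges over genuinely random elements, still follows — but by quantifier elimination every element of $\mathcal C$ is a limit of simple-model elements, and the inequality $\mathbb P\llbracket\varphi(x;\tilde a_i)\rrbracket\leq\mathbb P\llbracket\varphi(x;\tilde a_{i+1})\rrbracket$ is a closed condition on $x$, so it is preserved under limits. This completes the transfer: $T^R$ has $SOP$.
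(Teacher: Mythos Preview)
Your proof is correct and follows essentially the same route as the paper: constant random elements $\tilde a_i,\tilde b_i$ witness the $SOP$, the strict clause is read off directly, and monotonicity for an arbitrary $g\in M_0^\Omega$ is verified termwise via the finite-sum decomposition of Proposition~\ref{transfer}. The one point to tidy is your transfer to $\mathcal C$: the paper simply notes that ${\cal U}^\Omega$ is an elementary substructure of $\mathcal C$, so the universally quantified monotonicity condition (which holds in ${\cal U}^\Omega$ by density from ${\cal U}_0^\Omega$) passes up immediately---no compactness or finite-approximation argument is needed, and your aside that ``by quantifier elimination every element of $\mathcal C$ is a limit of simple-model elements'' is neither correct nor required.
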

\begin{proof}
Let $\Omega$ be an atomless probability space and ${\cal U}_0^\Omega$ the simple pre-model with underlying probability space $\Omega$ (as in Subsection~2.2).
As $T$ has $SOP$, there are a formula $\phi(x;y)$ and  sequences $(a_i:i<\omega)$ and $(b_j:j<\omega)$ of elements in $\cal U$ such that $\phi(b;a_i)\leq\phi(b;a_{i+1})$ for any $b\cal\in U$, and $\models\neg\phi(b_{i+1};a_i)\wedge\phi(b_{i+1};a_{i+1})$ for each $i<\omega$. Therefore,
${\Bbb P}\llbracket\phi(b;a_i) \rrbracket\leq{\Bbb P}\llbracket\phi(b;a_{i+1})\rrbracket$, for any $b\in\cal U$. (Notice that in  the previous sentence  $b$ is the constant map $f_b:\Omega\to\{b\}$ defined by $t\mapsto b$. Similarly for the $a_i$'s.) Also, ${\Bbb P}\llbracket\phi(b_{i+1};a_i)\rrbracket=0$ and ${\Bbb P}\llbracket\phi(b_{i+1};a_{i+1})\rrbracket=1$ for each $i<\omega$.

 It is easy to verify that for any $f\in{\cal U}_0^\Omega$, ${\Bbb P}\llbracket\phi(f;a_i)\rrbracket\leq{\Bbb P}\llbracket\phi(f;a_{i+1})\rrbracket$.
Indeed, we can assume that every $f\in{\cal U}_0^\Omega$ is of the form $\sum_{j\leq k}c_j^{{\cal A}_i}$ where $c_1,\ldots,c_k\in\cal U$ and ${\cal A}_1,\ldots,{\cal A}_k$ is a measurable partition of $\Omega$. Therefore,
\begin{align*}
{\Bbb P}\llbracket\phi(f;a_{i})\rrbracket & ={\Bbb P}\llbracket\phi(\sum_{j\leq k}c_j^{{\cal A}_j};a_i)\rrbracket=\sum_{j\leq k}\mu_0({\cal A}_j)\cdot{\Bbb P}\llbracket\phi(c_j;a_i)\rrbracket \\
& \leq \sum_{j\leq k}
\mu_0({\cal A}_j)\cdot{\Bbb P}\llbracket\phi(c_j;a_{i+1})\rrbracket={\Bbb P}\llbracket\phi(\sum_{j\leq k}c_j^{{\cal A}_j};a_{i+1})\rrbracket  \\ & = {\Bbb P}\llbracket\phi(f;a_{i+1})\rrbracket.
\end{align*}
 As ${\cal U}_0^\Omega$ is metrically dense in ${\cal U}^\Omega$, the same holds for any $f\in{\cal U}^\Omega$.
As ${\cal U}^\Omega$ is an elementary substructure of $\cal C$, we have ${\Bbb P}\llbracket\phi(f; a_i)\rrbracket\leq{\Bbb P}\llbracket\phi(f;a_{i+1})\rrbracket$ for all $f\in\cal C$ and  for any $i<\omega$. Therefore, $T^R$ has $SOP$.
\end{proof}

\begin{Corollary}[Shelah's theorem for randomization theories]\label{continuous-Shelah}
$T^R$ is stable if and only if it has $NIP$ and $NSOP$.
\end{Corollary}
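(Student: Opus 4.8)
The plan is to deduce the equivalence by assembling results that are (or will be) already in hand: the classical theorem of Shelah that a complete first-order theory is stable if and only if it is $NIP$ and $NSOP$; the preservation statements of the previous two subsections, namely Corollary~\ref{Corollary} ($NIP$), Corollary~\ref{Corollary 2} (stability), and Proposition~\ref{sop->sop} ($SOP$); and the elementary observation that a ``bad configuration'' in $T$ produces one in $T^R$ after replacing each parameter $a$ by the constant random variable $f_a\colon t\mapsto a$, since then $\mathbb{P}\llbracket\phi(f_{a_1},\dots,f_{a_n})\rrbracket=\phi(a_1,\dots,a_n)\in\{0,1\}$ (cf.\ Subsection~\ref{simple} and Proposition~\ref{transfer}). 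Of the two implications the forward one is soft and the backward one carries the content, so I would organise the write-up around $(\Leftarrow)$.

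For $(\Leftarrow)$, assume $T^R$ is $NIP$ and $NSOP$; the strategy is to transport both hypotheses down to $T$, invoke classical Shelah, and transport stability back up. For $NIP$: if some $\phi(x;y)\in L$ had $IP$ in $T$, then, using Remark~\ref{remark-com} to witness it by $(a_i)$ and $(b_I)$ in a model of $T$ and passing to the corresponding constant random variables $f_i$, $g_I$ in a simple randomization, one gets $\mathbb{P}\llbracket\phi(f_i;g_I)\rrbracket=\phi(a_i;b_I)$, so $\mathbb{P}\llbracket\phi(x;y)\rrbracket$ has $IP$ in $T^R$ with $r=0<s=1$; contrapositively, $T^R$ $NIP$ gives $T$ $NIP$. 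For $NSOP$: this is exactly the contrapositive of Proposition~\ref{sop->sop}. Hence $T$ is $NIP$ and $NSOP$, so $T$ is stable by Shelah's theorem, and then $T^R$ is stable by Corollary~\ref{Corollary 2}.

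For $(\Rightarrow)$ I would argue that stability always entails $NIP$ and $NSOP$. The $NIP$ half is standard in continuous logic (a formula with $IP$ yields, through an indiscernible sequence, one with $OP$); alternatively it can be routed through $T$, since $T^R$ stable forces $T$ stable (an $L$-formula with $OP$ in $T$ has $OP$ in $T^R$ via constant random variables), whence $T$ is $NIP$ by Shelah and $T^R$ is $NIP$ by Corollary~\ref{Corollary}. The $NSOP$ half should follow from the fact that $\epsilon$-$SOP$ in the sense of Definition~\ref{SOP-definition} implies an order property for a related formula --- this is precisely what makes the notion of \cite{K-sop} a candidate for Shelah's theorem in the first place, and I would cite \cite{K-sop,K-classification} for it. The one genuinely delicate point, and the reason the two directions are handled asymmetrically, is that the $SOP$-transfer available to us (Proposition~\ref{sop->sop}) runs only from $T$ to $T^R$: we neither have nor need the converse ``$T^R$ $SOP\Rightarrow T$ $SOP$'', so $(\Rightarrow)$ cannot obtain $T^R$ $NSOP$ by descent to $T$ and must instead rely on the intrinsic implication ``stable $\Rightarrow NSOP$'' in continuous logic. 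Securing (or citing) that implication for Definition~\ref{SOP-definition} is the main obstacle; everything else is mechanical, with all the real difficulty already discharged inside Corollaries~\ref{Corollary} and~\ref{Corollary 2} and Proposition~\ref{sop->sop}.
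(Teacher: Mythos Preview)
Your proposal is correct and matches the paper's argument in substance: for $(\Leftarrow)$ both you and the paper use exactly the four ingredients $T^R\ NIP\Rightarrow T\ NIP$ (via constant random variables), Proposition~\ref{sop->sop}, classical Shelah, and Corollary~\ref{Corollary 2}, the only cosmetic difference being that the paper phrases it as the contrapositive (``assume $T^R$ is $NIP$ but unstable, deduce $T^R$ has $SOP$'') rather than your direct transport-down/transport-up formulation. For $(\Rightarrow)$ the paper simply writes ``evident'', whereas you are more scrupulous in isolating the point that stable $\Rightarrow$ $NSOP$ in the sense of Definition~\ref{SOP-definition} needs to be cited; this is a fair caveat but not a divergence in strategy.
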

\begin{proof}
Left to right is evident.
For the converse, suppose that $T^R$ has $NIP$ but it is not stable. Clearly, $T$ has $NIP$. 
Since stability is preserved under randomization,
$T$ is not stable.
Now, by Shelah's theorem for classical theories (\cite[Theorem~2.67]{S}), $T$ has $SOP$. By Proposition~\ref{sop->sop}, $T^R$ has $SOP$.
\end{proof}

\begin{Remark}\label{reasons}
\em{
  Despite the fact that, there is no universally accepted definition for $SOP$ in continuous logic, there are some justifications as why  Definition~\ref{SOP-definition} could be a more suitable candidate for this notion in continuous logic. In \cite{Hanson}, Hanson studied some $SOP$-like properties for continuous logic. It is easy to see that,  if $T^R$
has the $SOP$ in the sense of Definition~\ref{SOP-definition}, there is a definable predicate of the form ${\Bbb P}\llbracket\psi(y;z)\rrbracket$ in $L^R$ such that ${\Bbb P}\llbracket\psi(y;z)\rrbracket$ defines  a quasi-metric with an infinite $0$-chain of elements of $\cal C$, in the sence of \cite[Def. B.5.2]{Hanson}. 
 So, within the realm of randomization, the notion of $NSOP$ of  Definition~\ref{SOP-definition} is presumably weakest in  \cite{Hanson}.    If we expand the language $L$ with continuous connectives and the uniform limits of formulas, then we can assume that $\psi(y;z)$ is an $L$-formula. Therefore, with the previous statement, it is easy to conclude that
 $\psi(y;z)$ is a quasi-metric (on $\cal U$) with an infinite $0$-chain of elements of $\cal U$. This is a converse to Proposition~\ref{sop->sop} above.
In this case, by Proposition B.5.5 in \cite{Hanson}, $T$ has $SOP_n$ for all $n$. (Cf. \cite[Def. B.5.4]{Hanson} for the definition of $SOP_n$.) This means that our definition of $SOP$ is strong enough that Corollary~\ref{continuous-Shelah}  does not state anything trivial/tautological.}
\end{Remark}

\section{The Preservation Theorems for Continuous Theory $T$}
In this section, we sketch  how the preservation theorems can be extended to the situation where $T$ itself is a continuous theory. To this end, in the NIP case, we will explain those parts  with more details whenever the proof differs from the discrete case. 

For the reasons that will be explained later (cf. Remark~\ref{remark}), the presentation of the proof sketch  for the continuous case is somewhat different from the discrete one,  although one  could  easily translate the above method into the present context. The key point is that in the continuous case, all the tools, including simple models and the continuous version of Fact~\ref{fact_genstable}, are also available.



For  the rest of this section, we assume  $T$ to be a {\bf continuous} theory and $M$ a model of $T$.
Fix $n,k\in \Bbb N$ and for each $i\leq n$, assume $\bar x_i=(x_{i1},\ldots,x_{ik})$ to be $n$ tuples of  sequence of variables of length $k$. Let    $A=\{r,s\}$ with $r<s\leq 1$, and $\phi(x;y)$ an $L$-formula.
As before ${\rm Ave}(\bar x_i)(\phi(x;y))$ stands for $\frac{1}{k}\sum_{j=1}^k\phi(x_{ij};y)$.

\noindent
A $\phi$-$k$-$n$-$A$-condition $\Phi(\bar x_1,\ldots, \bar x_n)$ is an $L$-condition of the form in

 $\exists y\big(\bigwedge_{i\in I} {\rm Ave}(\bar x_i)(\phi(x;y))\leq r\wedge\bigwedge_{i\notin I} {\rm Ave}(\bar x_i)(\phi(x;y))\geq s\big) $ or

 $\forall y\big(\bigvee_{i\in I} {\rm Ave}(\bar x_i)(\phi(x;y))\leq r\vee\bigvee_{i\notin I} {\rm Ave}(\bar x_i)(\phi(x;y))\geq s\big)$

\noindent
where    $I\subseteq\{1,\ldots,n\}$.

 Notice that every $\phi$-$k$-$n$-$A$-condition is expressible by a (continuous)  first-order formula/condition (which only use $A$). For a tuple $\boldsymbol{b} =(\bar a_1,\ldots,\bar a_n)\in(M^k)^n$,  the $\phi$-$k$-$n$-$A$-type of $\boldsymbol{b}$, denoted by $tp_{\phi,k,n,A}(\boldsymbol{b})$, is the set of all $\phi$-$k$-$n$-$A$-conditions $\Phi(\bar x_1,\ldots,\bar x_n)$ such that $\models\Phi(\boldsymbol{b})$.
\begin{Definition} {\em Fix $n,k\in \Bbb N$ and $A=\{r,s\}$ with $r<s$.
	Let $(\bar a_i:i\leq\omega)$ be a sequence of $k$-tuples of parameters. By the above notation, we say that a sequence $(\bar a_i:i\leq\omega)$ is a $\phi$-$k$-$n$-$A$-indiscernible sequence (over the empty set) if for each $i_1<\cdots<i_n<\omega$ and $j_1<\cdots<j_n<\omega$,
	$${\rm tp}_{\phi,k,n,A}(\bar a_{i_1},\ldots,\bar a_{i_n})={\rm tp}_{\phi,k,n,A}(\bar a_{j_1},\ldots,\bar a_{j_n}).$$ }
\end{Definition}

The following fact follows from the finite Ramsey theorem.
\begin{Fact} \label{Ramsey 2}
	Fix $n,k,m\in\Bbb N$ and $A=\{r,s\}$ with $r<s\leq 1$. Then there is some large number $N\in\Bbb N$
	such that for every sequence $(\bar a_1,\ldots,\bar a_N)$ of $k$-tuples of elements  of  $M$ there exists a $\phi$-$k$-$n$-$A$-indiscernible subsequence  $(\bar a_{i_1},\ldots,\bar a_{i_m})$ of length $m$.
\end{Fact}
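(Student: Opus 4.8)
\textbf{Proof proposal for Fact~\ref{Ramsey 2}.}
The plan is to reduce the statement to the finite Ramsey theorem applied to a coloring of increasing $n$-element subsets of $\{1,\ldots,N\}$, exactly as in the proof of Fact~\ref{Ramsey} and Fact~\ref{Ramsey strongest}, the only novelty being that the ``colors'' are now $\phi$-$k$-$n$-$A$-types rather than $\Delta_{\phi-n}$-types. First I would observe that, up to logical equivalence, there are only finitely many $\phi$-$k$-$n$-$A$-conditions $\Phi(\bar x_1,\ldots,\bar x_n)$: such a condition is determined by a choice of quantifier ($\exists$ or $\forall$) together with a subset $I\subseteq\{1,\ldots,n\}$, so there are at most $2^{n+1}$ of them. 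Consequently there are only finitely many possible $\phi$-$k$-$n$-$A$-types, say $C$ many where $C\le 2^{2^{n+1}}$, since a type is just a subset of this finite set of conditions.

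Next I would set up the coloring. Given $m$, let $N = R(C; n, m)$ be the Ramsey number guaranteeing that any $C$-coloring of the $n$-element subsets of an $N$-element linearly ordered set admits a monochromatic subset of size $m$. For an arbitrary sequence $(\bar a_1,\ldots,\bar a_N)$ of $k$-tuples from $M$, define a coloring $c$ on $n$-element subsets of $\{1,\ldots,N\}$ by sending $\{i_1<\cdots<i_n\}$ to $\mathrm{tp}_{\phi,k,n,A}(\bar a_{i_1},\ldots,\bar a_{i_n})$; this is well-defined because each such type lies in our finite set of $C$ possibilities. By the finite Ramsey theorem there is a subset $\{i_1<\cdots<i_m\}$ on which $c$ is constant, and by construction the subsequence $(\bar a_{i_1},\ldots,\bar a_{i_m})$ is $\phi$-$k$-$n$-$A$-indiscernible: for any two increasing $n$-tuples drawn from the indices $i_1,\ldots,i_m$, the corresponding $\phi$-$k$-$n$-$A$-types agree because both are the constant color.

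There is essentially no hard part here; the statement is a routine instance of the pigeonhole/Ramsey machinery already invoked twice in Section~2, and the only point requiring a moment's care is the finiteness of the set of $\phi$-$k$-$n$-$A$-conditions up to logical equivalence, which I noted above is immediate from the syntactic shape of these conditions. One subtlety worth flagging, though it does not affect correctness, is that in the continuous setting a ``condition'' is a closed condition of the form ``$(\text{formula}) = 0$'' (or a conjunction/disjunction thereof via $\max$ and $\min$), so strictly speaking the coloring takes values in the finite Boolean algebra generated by these finitely many closed conditions; this is still finite, and the argument goes through verbatim. Hence $N$ exists and depends only on $n$, $k$, $m$, and $A$ (in fact only on $n$ and $m$, since $C$ is bounded by a function of $n$ alone).
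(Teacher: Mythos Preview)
Your proof is correct and follows exactly the approach the paper indicates: the paper simply states that the fact ``follows from the finite Ramsey theorem,'' and your write-up supplies the standard details (finitely many $\phi$-$k$-$n$-$A$-conditions, hence finitely many types, color increasing $n$-subsets by type, extract a monochromatic $m$-subset). There is nothing to add.
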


Fix $N\in\omega+1$. Let $(\mu_i(x):i<N)$ be a sequence of Keisler measures on vaiable $x$, $\phi(x;y)$ a formula, and $r<s\leq 1$. We say that $(\mu_i:i< N)$ is shattered by elements in $M$ for $\phi(x;y)$ and $r<s$ if for any $I\subseteq\{i:i<N\}$ there is $b_I\in M$ such that $\mu_i(\phi(x;b_I))\leq s$ if $i\in I$ and $\mu_i(\phi(x;b_I))\geq s$ if $i\notin I$. In this case, sometimes we simply say that the sequence is shattered (by elements in $M$). 
\begin{Lemma} \label{main lemma}
	Let $T$ be an $NIP$ theory and $M$ a model of $T$. Assume that there is a formula $\phi(x;y)$ and $r<s\leq 1$, such that for each natural number $n$, there are average measures $(\mu_{1,n},\ldots,\mu_{n,n})$ (where $\mu_{i,n}$ is of the form ${\rm Ave}(a_1,\ldots,a_k)$ with $a_1,\ldots,a_k\in M$) such that $(\mu_{1,n},\ldots,\mu_{n,n})$ is shattered by elements in $M$ for $\phi(x;y)$ and $r<s$. Then there is an infinite sequence $(\hat\mu_i:i<\omega)$ of pseudo-finite measures 
which is shattered by elements in the monster model for $\phi(x;y)$ and $r<s$.
\end{Lemma}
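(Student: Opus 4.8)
The plan is to manufacture the infinite shattered family by a compactness argument inside the monster model $\mathcal U$ of $T$, feeding in the infinitely many finite shattered configurations supplied by the hypothesis. The one genuine preparation is to make the supports of the average measures uniformly bounded. Since $T$ is $NIP$, the formula $\phi(x;y)$ is $NIP$, so fixing $\epsilon<\tfrac14(s-r)$ the continuous version of Fact~\ref{fact_genstable} supplies a single $k=k(\phi,\epsilon)$ such that every generically stable measure on $M$ -- in particular every average measure -- is $\epsilon$-approximated, uniformly in its $y$-parameter, by some ${\rm Ave}(a_1,\dots,a_k)$ with $a_i\in M$. Replacing each $\mu_{i,n}$ by such an approximation and passing from $r<s$ to $r':=r+\epsilon<s-\epsilon=:s'$, I may henceforth assume that in every configuration each measure is of the form ${\rm Ave}(\bar a)$ with $\bar a\in M^{k}$ for this fixed $k$; I keep writing $r,s$ for $r',s'$, the harmless shrinking of the gap being irrelevant for the intended application of the lemma.

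Next I would assemble the type. For $n\in\Bbb N$ and a partition $\{1,\dots,n\}=I\sqcup J$, let $\Phi_{I,J}(\bar x_1,\dots,\bar x_n)$ be the $\phi$-$k$-$n$-$A$-condition
$$\exists y\Big(\bigwedge_{i\in I}{\rm Ave}(\bar x_i)(\phi(x;y))\le r\ \wedge\ \bigwedge_{i\notin I}{\rm Ave}(\bar x_i)(\phi(x;y))\ge s\Big),$$
and let $\Gamma$ be the set, in variable tuples $(\bar x_i:i<\omega)$ with each $\bar x_i$ of length $k$, of all conditions $\Phi_{I,J}(\bar x_{i_1},\dots,\bar x_{i_n})$ as $n$ ranges over $\Bbb N$, $i_1<\dots<i_n<\omega$, and $\{1,\dots,n\}=I\sqcup J$. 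I claim $\Gamma$ is finitely satisfiable in $M$: a finite $\Gamma_0\subseteq\Gamma$ mentions only the tuples $\bar x_i$ for $i$ in some $p$-element set $\{i_1<\dots<i_p\}$, and the length-$p$ configuration $(\bar c_1,\dots,\bar c_p)$ in $M^k$ with its witnesses $(b_{I'})_{I'\subseteq\{1,\dots,p\}}$ given by the hypothesis satisfies $\Phi_{I',\{1,\dots,p\}\setminus I'}(\bar c_1,\dots,\bar c_p)$ for every $I'$; since deleting conjuncts only weakens an existential condition, every member of $\Gamma_0$ -- a partition condition on a subtuple -- then holds under the order-preserving assignment $\bar x_{i_u}\mapsto\bar c_u$. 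Hence $\Gamma$ is consistent with $T$ and is realized in $\mathcal U$ by some $(\bar a_i:i<\omega)$; I put $\hat\mu_i:={\rm Ave}(\bar a_i)$, a finitely supported, hence pseudo-finite, measure on $\mathcal U$.

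Finally, to extract the witnesses, for each $I\subseteq\omega$ I would consider the partial type $p_I(y)$ consisting of the conditions ${\rm Ave}(\bar a_i)(\phi(x;y))\le r$ for $i\in I$ and ${\rm Ave}(\bar a_i)(\phi(x;y))\ge s$ for $i\notin I$. Any finite subset of $p_I$ involves finitely many indices with an induced partition, and the corresponding instance of some $\Phi_{I_0,J_0}$ lying in $\Gamma$ produces a $y$ witnessing it; so $p_I$ is finitely satisfiable and, by saturation of $\mathcal U$, realized by some $b_I$. Then $\hat\mu_i(\phi(x;b_I))\le r$ for $i\in I$ and $\ge s$ for $i\notin I$, i.e.\ $(\hat\mu_i:i<\omega)$ is shattered by elements of $\mathcal U$ for $\phi(x;y)$ and $r<s$, which is exactly the conclusion. (One may instead first apply Fact~\ref{Ramsey 2} to replace each finite configuration by a $\phi$-$k$-$n$-$A$-indiscernible subconfiguration before stretching; this shortens the bookkeeping but is not logically necessary.) The hard part -- or rather the step doing the real work -- is the support-size uniformization: the honest use of the continuous $VC$-theorem is where the $NIP$ hypothesis on $T$ enters, and it is precisely what lets the unboundedly many finite configurations be encoded by a single first-order type over $\emptyset$. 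The rest is routine compactness plus continuous-logic hygiene: ${\rm Ave}(\bar x_i)(\phi(x;y))$ is a genuine definable predicate and ``$\le r$'', ``$\ge s$'' are closed conditions, so the existential conditions in $\Gamma$ and the saturation extraction of $b_I$ go through in the usual (two-step) way.
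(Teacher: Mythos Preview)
Your argument is correct and reaches the desired conclusion (with the acknowledged, harmless shrinking of the gap $r<s$ to $r+\epsilon<s-\epsilon$), but it proceeds along a different line from the paper's sketch. The paper builds the sequence $(\hat\mu_i)$ by an ultrapower/ultralimit construction applied directly to the finite configurations $(\mu_{1,n},\dots,\mu_{n,n})$, obtaining genuinely pseudo-finite (not a priori finitely supported) measures; the $NIP$ hypothesis is then invoked only \emph{a posteriori}, via the cited Lemma~3.13 of \cite{Anderson2}, to guarantee that these ultralimit measures are generically stable and hence definable, which is what makes the shattering witnesses $b_I$ extractable in the monster. You instead spend the $NIP$ hypothesis \emph{up front}, through the continuous analogue of Fact~\ref{fact_genstable}, to uniformize all supports to a single size $k$; after that a plain compactness/saturation argument in $\mathcal U$ suffices, and the resulting $\hat\mu_i$ are honest $k$-point averages. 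Your route is more elementary---no ultraproduct machinery, no appeal to the external result on pseudo-finite measures---and delivers finitely supported measures; the paper's route preserves the original $r,s$ exactly and produces pseudo-finite measures in the broader sense actually used in Theorem~\ref{average shattered}.
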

\begin{proof}[Sketch of the proof]  
By the assumption, using the ultrapower and ultralimit constructions, it is easy to construct a sequence $(\hat\mu_i:i<\omega)$ of pseudo-finite measures 
 with the desired property. Notice that definability of the $\hat\mu_i$'s will be used in the proof. (Indeed, similar to the classical case, definability follows from the fact that every pseudo-finite measure in an $NIP$ theory is generically stable. Cf. Lemma~3.13 of \cite{Anderson2}.)
\end{proof}

\begin{Theorem}\label{average shattered}
	Let $T$ be a complete continuous theory with $NIP$. Then for each model $M$ of $T$, and formula $\phi(x;y)$ and $r<s\leq 1$, there is some natural number $n$, such that there is {\bf  no} average measures $(\mu_{1},\ldots,\mu_{n})$ (where $\mu_{i}$ is of the form ${\rm Ave}(a_1,\ldots,a_k)$ with  $a_1,\ldots,a_k\in M$) such that $(\mu_1,\ldots,\mu_n)$ is shattered by elements in $M$.
\end{Theorem}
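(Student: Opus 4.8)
The plan is to argue by contradiction along the skeleton of Theorem~\ref{main}, exploiting that — unlike in the randomization setting — the hypothesis of Theorem~\ref{average shattered} only asks for \emph{single-element} witnesses $b_I\in M$. So suppose the conclusion fails: fix $\phi(x;y)$ and $r<s\leq 1$ such that for every $n\in\Bbb N$ there is a sequence $(\mu_{1,n},\dots,\mu_{n,n})$ of average measures of elements of $M$ (each $\mu_{i,n}$ of the form ${\rm Ave}(a_1,\dots,a_k)$) which is shattered by elements of $M$ for $\phi$ and $r<s$. First I would invoke Lemma~\ref{main lemma} to pass to an infinite sequence $(\hat\mu_i:i<\omega)$ of pseudo-finite measures over the monster model $\cal U$ of $T$, still shattered by elements of $\cal U$ for $\phi$ and $r<s$; since $T$ is $NIP$, each $\hat\mu_i$ is moreover generically stable, hence definable (cf.\ Lemma~3.13 of \cite{Anderson2}).

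The crux is then a reduction to a single fixed formula. Put $\epsilon=\tfrac18(s-r)$ and apply the continuous version of Fact~\ref{fact_genstable} to $\phi$ and $\epsilon$: this yields an integer $k=k(\phi,\epsilon)$, uniform in $i$, and elements $a_{i,1},\dots,a_{i,k}\in\cal U$ for each $i<\omega$ with $\sup_b\bigl|\hat\mu_i(\phi(x;b))-{\rm Ave}(a_{i,1},\dots,a_{i,k})(\phi(x;b))\bigr|\leq\epsilon$. Writing $\nu_i={\rm Ave}(a_{i,1},\dots,a_{i,k})$, $\bar a_i=(a_{i,1},\dots,a_{i,k})$, $r'=r+2\epsilon$ and $s'=s-2\epsilon$ (so still $r'<s'$), the sequence $(\nu_i:i<\omega)$ is shattered by elements of $\cal U$ for $\phi$ and $r'<s'$; here definability of the $\hat\mu_i$ is what makes the $\nu_i$ honest finite averages, so that the identity below is literal. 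Now consider the partitioned $L$-formula
$$\psi(\bar x;y):={\rm Ave}(\bar x)(\phi(x;y))=\frac1k\sum_{h=1}^k\phi(x_h;y),\qquad \bar x=(x_1,\dots,x_k),$$
which is a genuine (continuous) $L$-formula, being a continuous combination of $k$ instances of $\phi$. Since $\nu_i(\phi(x;b))=\psi(\bar a_i;b)$ for all $b$, the shattering of $(\nu_i:i<\omega)$ says precisely that for each $I\subseteq\omega$ there is $b_I\in\cal U$ with $\psi(\bar a_i;b_I)\leq r'$ when $i\in I$ and $\psi(\bar a_i;b_I)\geq s'$ when $i\notin I$; i.e.\ $\psi(\bar x;y)$ has $IP$, witnessed by $(\bar a_i:i<\omega)$ and $r'<s'$. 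But $T$ is $NIP$, so every partitioned $L$-formula — in particular $\psi$ — has $NIP$, a contradiction.

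I expect the substantive work to lie not in Theorem~\ref{average shattered} itself but in its two inputs: Lemma~\ref{main lemma}, i.e.\ the ultrapower/ultralimit construction producing an infinite shattered sequence of \emph{definable} pseudo-finite measures, and the continuous $VC$-theorem (the continuous form of Fact~\ref{fact_genstable}), which is exactly what makes the arity $k$ uniform and so turns the problem into a statement about the single formula $\psi$. Given those, the deduction above is immediate. A more hands-on alternative, parallel to the discrete proof, would be to bypass $\psi$: extract from $(\bar a_i)$ a $\phi$-$k$-$n$-$A$-indiscernible subsequence via Fact~\ref{Ramsey 2} (with $A=\{r',s'\}$) and run the column-wise total-variation estimate of Theorem~\ref{main} using the continuous analogue of Lemma~\ref{lemma}; this is what the $\phi$-$k$-$n$-$A$-condition apparatus of this section is tailored to support, but it is longer, so I would prefer the reduction to $\psi$ sketched above.
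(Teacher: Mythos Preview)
Your argument is correct, but it takes a genuinely different (and more elementary) route than the paper's own proof. Both proofs begin identically: assume the conclusion fails, invoke Lemma~\ref{main lemma} to obtain an infinite shattered sequence $(\hat\mu_i:i<\omega)$ of pseudo-finite measures in the monster, and observe via \cite[Lemma~3.13]{Anderson2} that each $\hat\mu_i$ is generically stable. From there, however, the paper appeals to the Bourgain--Fremlin--Talagrand theorem (the equivalence (iv)~$\Longleftrightarrow$~(vi) of \cite[Theorem~2F]{BFT}, translated as in \cite[Subsection~4.2]{K-classification}) to produce a non-Borel-definable measure in the closure of $\{\hat\mu_i:i<\omega\}$, contradicting Borel definability of invariant measures in $NIP$ theories \cite[Lemma~3.10]{Anderson2}. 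You instead use the continuous $VC$-theorem to replace each $\hat\mu_i$ by an honest $k$-average $\nu_i$ with $k$ uniform in $i$, and then observe that shattering of $(\nu_i)$ is literally $IP$ for the single $L$-formula $\psi(\bar x;y)=\frac1k\sum_{h\leq k}\phi(x_h;y)$. Your route bypasses the analytic machinery entirely and stays inside model theory; the paper's route, as the authors themselves note in Remark~\ref{remark}(i), is chosen deliberately to exhibit the connection with \cite{BFT}, not because it is forced. In fact your reduction to $\psi$ shows that even Lemma~\ref{main lemma} is dispensable here: one can run the uniform-$k$ approximation already on the finite shattered sequences $(\mu_{1,n},\dots,\mu_{n,n})$ in $M$ and conclude by compactness (Remark~\ref{remark-com}) that $\psi$ has $IP$, without ever passing to the monster or to pseudo-finite measures. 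One small expository point: your parenthetical ``definability of the $\hat\mu_i$ is what makes the $\nu_i$ honest finite averages'' is slightly off --- what is needed there is \emph{generic stability} of the $\hat\mu_i$ (so that the continuous analogue of Fact~\ref{fact_genstable} applies), not definability per se.
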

\begin{proof}[Sketch of the  proof]
	Assume for a contradiction that there is no such $n$ for formula $\phi(x;y)$ and $r<s$. By Lemma~\ref{main lemma},  then there is an infinite sequence $(\hat\mu_i:i<\omega)$ of pseudo-finite measures 
 which is shattered by elements in the monster model. On the other hand, as $T$ has $NIP$, by Lemma~3.13 of \cite{Anderson2},
  every $\hat\mu_i$ is generically stable (and so definable and finitely satisfiable). Using a suitable translation\footnote{Such a translation is provided in Subsection 4.2 of \cite{K-classification}.}  of the equivalence of (iv)~$\Longleftrightarrow$~(vi) in Theorem~2F of \cite{BFT},  we  can conclude that  there is a non-Borel definable measure in the closure of  $\{\hat\mu_i:i<\omega\}$ in the space of measures. This contradicts the fact that finitely satisfiable (and even invariant) measures in $NIP$ theories are Borel definable (cf. Lemma~3.10 of \cite{Anderson2}).
\end{proof}

In the following  theorem, the notion of shattering by elements in $M_0^{\Omega}$ is essentially the same as that defined before Theorem~\ref{main}.
\begin{Theorem}\label{main 2}
Suppose $T$ is a continuous  $NIP$ theory, $M$ a model of $T$, and $M_0^\Omega$ a simple randomizaion of $M$. For each $r<s$, there is a natural number $N$ such that there is no $f_1,\ldots,f_N\in M_0^\Omega$ such that for any $I\subseteq\{1,\ldots,N\}$ there is some $g_I\in M_0^\Omega$ with the following property:
$$\Bbb E\llbracket\phi(f_i;g_I)\rrbracket\leq r, \ \ \ \text{ if } \ i\in I, \text{ and }
$$
$$\Bbb E\llbracket\phi(f_i;g_I)\rrbracket\geq s, \ \ \ \text{ if } \ i\notin I. 
 \ \ \ \ \ \ \ $$

\end{Theorem}
\begin{proof}[Sketch of the  proof]
	The proof is essentially similar to Proportion~\ref{independence}. 
Note that Lemmas \ref{R-bound} and \ref{R-bound-2} also hold in the continuous case.
The difference is that we use Theorem~\ref{average shattered}, Fact~\ref{Ramsey 2}, and a continuous version of Lemma~\ref{lemma}.\footnote{An adaptation of Fact~2.3 of \cite{K-classification} gives such a lemma.} (Notice that one can use  Corollary~3.7 of \cite{Anderson2} instead of Fact~\ref{fact_genstable} above.)
\end{proof}
\begin{Corollary}
	If the continuous theory $T$ is $NIP$ then so is $T^R$.
\end{Corollary}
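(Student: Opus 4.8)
The plan is to mirror the discrete case (Corollary~\ref{Corollary}) exactly, now using Theorem~\ref{main 2} as the continuous analogue of Theorem~\ref{main}. First I would recall that, by the elimination of quantifiers for $T^R$ (\cite[Theorem~2.9]{BK}), every $L^R$-definable predicate is a uniform limit of boolean combinations of atomic formulas of the form $\mathbb{P}\llbracket\phi(x_1,\dots,x_n)\rrbracket$ with $\phi\in L$; here $\phi$ is itself a continuous $L$-formula, but this makes no difference to the argument. So it suffices to show that each such atomic predicate $\mathbb{P}\llbracket\phi(x;y)\rrbracket$ has $NIP$, and then that $NIP$ is preserved under continuous connectives ($\neg\varphi$, $\frac12\varphi$, $\varphi\dotdiv\psi$) and under uniform limits.

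Next I would handle the atomic predicates. Suppose toward a contradiction that $\mathbb{P}\llbracket\phi(x;y)\rrbracket$ has $IP$. By Definition~\ref{NIP def} and the compactness reformulation (Remark~\ref{remark-com}, which applies verbatim in the continuous setting), for some $r<s\le 1$ and every $N$ there is, in some pre-model of $T^R$ — and hence, by Fact~\ref{fact_genstable} (or its continuous version, Corollary~3.7 of \cite{Anderson2}) together with the density of simple pre-models, in a simple pre-model $M_0^\Omega$ of $M\models T$ — a sequence $(f_i:i<N)$ in $M_0^\Omega$ shattered by elements of $M_0^\Omega$ for $\phi$ and $r<s$. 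By Proposition~\ref{transfer} and Proposition~\ref{translate} (whose proofs only used the formal-sum representation of elements of $M_0^\Omega$ and are indifferent to whether $\phi$ is $\{0,1\}$-valued), this yields a sequence $(\mu_i:i<N)$ of average measures of elements of $M$ that is shattered by elements of $M_0^\Omega$ for $\phi$ and $r<s$. Since $T$ is a continuous $NIP$ theory, Theorem~\ref{main 2} bounds the length of any such sequence by some $N_{\phi,r,s}$, contradicting the fact that $N$ was arbitrary. Hence $\mathbb{P}\llbracket\phi(x;y)\rrbracket$ has $NIP$.

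Finally I would close the argument as in Corollary~\ref{Corollary}: if $\varphi,\psi\in L^R$ have $NIP$ then so do $\neg\varphi$, $\frac12\varphi$, and $\varphi\dotdiv\psi$ (each is an easy exercise, since a continuous connective applied to an $NIP$ predicate cannot create an $IP$ pattern of strictly positive gap), so every quantifier-free $L^R$-formula has $NIP$; and a uniform limit of $NIP$ predicates again has $NIP$, since an $IP$ witness with gap $s-r$ for the limit would, for a close enough approximant, give an $IP$ witness with gap roughly $s-r$ for that approximant. By the elimination of quantifiers in $T^R$, every $L^R$-definable predicate is such a uniform limit, so $T^R$ has $NIP$.

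I do not expect a serious obstacle here; the corollary is a formal consequence of Theorem~\ref{main 2} plus the quantifier-elimination bookkeeping already carried out in Corollary~\ref{Corollary}. The only point deserving care is that Propositions~\ref{transfer} and \ref{translate} were stated for classical $T$, so one should check — as I indicated above — that their proofs go through unchanged when $\phi$ is a continuous $L$-formula and $\mathbb{P}\llbracket\phi\rrbracket$ is genuinely $[0,1]$-valued; the formal-sum computation is identical, and the $VC$/average-measure machinery invoked is the continuous one (Fact~\ref{fact_genstable} replaced by Corollary~3.7 of \cite{Anderson2}), so no new ideas are needed.
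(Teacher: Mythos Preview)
Your proposal is correct and takes essentially the same approach as the paper: the paper's own proof is a single sentence, ``Similar to Corollary~\ref{Corollary}, this follows from Theorem~\ref{main 2} and elimination of quantifiers in $T^R$,'' and your write-up simply unpacks that sentence in detail. Your extra care in noting that Propositions~\ref{transfer} and~\ref{translate} must be checked for continuous $\phi$ is a legitimate point the paper leaves implicit.
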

\begin{proof}
	Similar to Corollary~\ref{Corollary}, this follows from Theorem~\ref{main 2} and elimination of quantifiers  in $T^R$. 
\end{proof}

We conclude the paper with the following remark.
\begin{Remark}\label{remark}
{\em 	(i) The reader can easily verify that a direct translation of the proof presented for classical logic to the continuous case is entirely possible, and in this case, Lemma \ref{main lemma} and Theorem \ref{average shattered} (and so Theorem~2F of \cite{BFT}) are not needed. Nevertheless, we believe that Lemma~\ref{main lemma} and Theorem~\ref{average shattered}  are interesting in their own right and may inspire other results elsewhere. Additionally, establishing a connection between analysis and logic is intriguing and valuable.
\newline
(ii) 
As we mentioned regarding the preservation of randomization from discrete models, the notion of
Littlestone dimension must replace Rademacher complexity and VC-dimension. Assuming these
tools are available, 
for the preservation of stability in continuous theories, one can use Grothendieck's theorem instead of Theorem~2F of \cite{BFT}. (See Fact~2.6 of \cite{K-classification} for the translation of Grothendieck's theorem into the language of logic).
\newline
(iii) 	After writing our proofs, we realized that in \cite{CT}, a generalization of  Corollary~\ref{Corollary} for $k$-dependent theories has been proven. That proof is based on an extensive discussion of combinatorial topics. 
It is interesting to explore whether our method could 
 lead to a simpler proof for $k$-dependent theories (at least for classical logic). We will study this issue elsewhere.
\newline
(iv) It is worth mentioning another proofs (for the preservations of NIP and stability) by Ibarlucia \cite{I}, which applies only to the special case (i.e., $\aleph_0$-categorical theories) in terms of tame topological dynamics. }

\end{Remark}

\section*{Acknowledgments}
The authors would like to express their sincere gratitude to Kyle Gannon for pointing out a gap in the first version of this paper, which concerned the translation of certain results requiring the independence of random variables.

The first author would like to thank School of Mathematics, Institute for Research in Fundamental Sciences  (IPM), P. O. Box 19395-5746, Tehran, Iran, for their support. 
This research was in part supported by a grant from IPM (No. 1404030118).

\end{document}